\newcommand{\paperfont}{\fontsize{11pt}{1.2\baselineskip}\selectfont}
\begin{document}


\theoremstyle{definition}
\makeatletter
\thm@headfont{\bf}
\makeatother
\newtheorem{theorem}{Theorem}[section]
\newtheorem{definition}[theorem]{Definition}
\newtheorem{lemma}[theorem]{Lemma}
\newtheorem{proposition}[theorem]{Proposition}
\newtheorem{corollary}[theorem]{Corollary}
\newtheorem{remark}[theorem]{Remark}
\newtheorem{example}[theorem]{Example}

\lhead{}
\rhead{}
\lfoot{}
\rfoot{}

\renewcommand{\refname}{References}
\renewcommand{\figurename}{Figure}
\renewcommand{\tablename}{Table}
\renewcommand{\proofname}{Proof}

\title{\textbf{Cycle symmetry, limit theorems, and fluctuation theorems for diffusion processes on the circle}}
\author{Hao Ge$^{1,2}$,\;\;\;Chen Jia$^{3,4}$,\;\;\;Da-Quan Jiang$^{3,5}$ \\
\footnotesize $^1$Biodynamic Optical Imaging Center, Peking University, Beijing 100871, P.R. China \\
\footnotesize $^2$Beijing International Center for Mathematical Research, Peking University, Beijing 100871, P.R. China \\
\footnotesize $^3$LMAM, School of Mathematical Sciences, Peking University, Beijing 100871, P.R. China \\
\footnotesize $^4$Department of Mathematical Sciences, The University of Texas at Dallas, Richardson, Texas 75080, U.S.A.\\
\footnotesize $^5$Center for Statistical Science, Peking University, Beijing 100871, P.R. China \\
\footnotesize Email: haoge@pku.edu.cn (H. Ge), jiac@pku.edu.cn (C. Jia), jiangdq@math.pku.edu.cn (D.Q. Jiang)}
\date{}                              
\maketitle                           
\thispagestyle{empty}                

\paperfont

\begin{abstract}
Cyclic structure and dynamics are of great interest in both the fields of stochastic processes and nonequilibrium statistical physics. In this paper, we find a new symmetry of the Brownian motion named as the quasi-time-reversal invariance. It turns out that such an invariance of the Brownian motion is the key to prove the cycle symmetry for diffusion processes on the circle, which says that the distributions of the forming times of the forward and backward cycles, given that the corresponding cycle is formed earlier than the other, are exactly the same. With the aid of the cycle symmetry, we prove the strong law of large numbers, functional central limit theorem, and large deviation principle for the sample circulations and net circulations of diffusion processes on the circle. The cycle symmetry is further applied to obtain various types of fluctuation theorems for the sample circulations, net circulation, and entropy production rate. \\
~\\
\noindent 
\textbf{Keywords}: excursion theory, Bessel process, Haldane equality, cycle flux, nonequilibrium \\
~\\
\noindent
\textbf{Mathematics Subject Classifications}: 60J60, 58J65, 60J65, 60F10, 82C31
\end{abstract}

\section{Introduction}
Cyclic Markov processes are widely used to model stochastic systems in physics, chemistry, biology, meteorology, and other disciplines. By a cyclic Markov process, we mean a Markov process whose state space has a cyclic topological structure. In general, we mainly focus on two types of cyclic Markov processes: nearest-neighbor random walks on the discrete circle, also called nearest-neighbor periodic walks, and diffusion processes on the continuous circle.

The path of a cyclic Markov process constantly forms the forward and backward cycles and the cycle dynamics of cyclic Markov processes has been studied for a long time. Among these studies, many authors noticed that nearest-neighbor periodic walks have an interesting symmetry \cite{samuels1975classical, dubins1996gambler, qian2006generalized, ge2008waiting}. Let $T^+$ and $T^-$ be the forming times of the forward and backward cycles by a nearest-neighbor periodic walk, respectively. Samuels \cite{samuels1975classical} and Dubins \cite{dubins1996gambler} proved that although the distributions of $T^+$ and $T^-$ may be different, their distributions, given that the corresponding cycle is formed earlier than its reversed cycle, are the same:
\begin{equation}\label{discretesymmetry}
P(T^+\leq u|T^+<T^-) = P(T^-\leq u|T^-<T^+),\;\;\;\forall u\geq 0.
\end{equation}
This equality, which characterizes the symmetry of the forming times of the forward and backward cycles, is named as the cycle symmetry in this paper. Recently, Jia et al. \cite{jia2016cycle} further generalized the cycle symmetry to general discrete-time and continuous-time Markov chains based on some non-trivial equalities about taboo probabilities.

In the previous studies, the cycle symmetry is mainly discussed for nearest-neighbor periodic walks. Besides, Kendall \cite{kendall1974pole} and Pitman and Yor \cite{pitman1981bessel} have proved the cycle symmetry for Brownian motion with constant drift. It is then natural to ask whether the cycle symmetry also holds for the continuous version of nearest-neighbor periodic walks, namely, diffusion processes on the circle. Intuitively, the answer should be affirmative. A natural idea is to approximate diffusion processes by random walks. However, this idea seems to obscure the essence of the cycle symmetry to some extent. In this paper, we prove the cycle symmetry for diffusion processes on the circle using the intrinsic method of stochastic analysis, instead of approximation techniques. We find that the essence of the cycle symmetry lies in a new symmetry of the Brownian motion named as the quasi-time-reversal invariance. Based on Brownian excursion theory \cite{ito1971poisson} and Williams' Brownian paths decomposition theorem \cite{williams1970decomposing, williams1974path}, we prove that the Brownian motion is invariant under a transformation called quasi-time-reversal (see Definition \ref{reversal2}).

In this paper, we use this new invariance of the Brownian motion to prove the cycle symmetry for diffusion processes on the circle. Let $X$ be a diffusion process on the unit circle. Let $T^+$ and $T^-$ be the forming times of the forward and backward cycles, respectively. Then the cycle symmetry for diffusion processes on the circle can also be formulated as \eqref{discretesymmetry}. Let $T = T^+\wedge T^-$ be the time needed for $X$ to form a cycle for the first time. Then the cycle symmetry can be rewritten as
\begin{equation}
P(T\leq u|T^+<T^-) = P(T\leq u|T^-<T^+),\;\;\;\forall u\geq 0.
\end{equation}
This implies that the time needed for $X$ to form a forward or backward cycle is independent of which one of these two cycles is formed. This independence result is important because it shows that the cycle dynamics for any diffusion process on the circle with any initial distribution is nothing but a renewal random walk (see Theorem \ref{renewal}).

The cycle symmetry established in this paper has some interesting applications. In the literature, the most important functionals associated with a cyclic Markov process are the sample circulations. In fact, the circulation theory of nearest-neighbor random walks and general Markov chains have been well established \cite{jiang2004mathematical, kalpazidou2006cycle}. In this paper, we develop the circulation theory for diffusion processes on the circle. Let $N^+_t$ and $N^-_t$ denote the numbers of the forward and backward cycles formed by the diffusion process $X$ up to time $t$, respectively. Then the sample circulations $J^+_t$ and $J^-_t$ of $X$ along the forward and backward cycles are defined as
\begin{equation}
J^+_t = \frac{1}{t}N^+_t,\;\;\;J^-_t = \frac{1}{t}N^-_t,
\end{equation}
respectively, and the sample net circulation is defined as $J_t = J^+_t-J^-_t$. In this paper, we prove the strong law of large numbers, functional central limit theorems, and large deviation principle for the sample circulations with the aid of the cycle symmetry.

Over the past two decades, the studies on fluctuation theorems (FTs) for Markov processes have become a central topic in nonequilibrium statistical physics \cite{seifert2008stochastic, seifert2012stochastic}. So far, there has been a large amount of literature exploring various types of FTs \cite{jarzynski1997nonequilibrium, kurchan1998fluctuation, lebowitz1999gallavotti, crooks1999entropy, seifert2005entropy, esposito2010three, spinney2012nonequilibrium}. In recent years, the FTs for the sample circulations and net circulation of cyclic Markov processes have been increasingly concerned \cite{qian2006generalized, andrieux2007network, seifert2012stochastic}. Interestingly, the cycle symmetry turns out to be a useful tool to study the FTs for diffusion processes on the circle. In this paper, we prove various types of FTs for the sample circulations and net circulation, including the transient FT, the Kurchan-Lebowitz-Spohn-type FT, the integral FT, and the Gallavotti-Cohen-type FT. These FTs characterize the symmetry of $X$ along the forward and backward cycles from different aspects. In particular, we prove that the sample circulations $(J^+_t,J^-_t)$ satisfy a large deviation principle with rate $t$ and good rate function $I_1$ with the following highly non-obvious symmetry:
\begin{equation}
I_1(x_1,x_2) = I_1(x_2,x_1)-\gamma(x_1-x_2),\;\;\;\forall x_1,x_2\in\mathbb{R},
\end{equation}
where $\gamma$ is a constant characterizing whether $X$ is a symmetric Markov process (see Theorem \ref{reversibility}). Moreover, we prove that the sample net circulation $J_t$ also satisfies a large deviation principle with rate $t$ and good rate function $I_2$ with the following symmetry:
\begin{equation}
I_2(x) = I_2(-x)-\gamma x,\;\;\;\forall x\in\mathbb{R}.
\end{equation}
In nonequilibrium statistical physics, a central concept is entropy production rate. In this paper, we also prove that the sample entropy production rate of diffusion processes on the circle satisfies the Gallavotti-Cohen-type FT.

The structure of this paper is organized as follows. In Section 2, we prove the quasi-time-reversal invariance of the Brownian motion using Brownian excursion theory. In Section 3, we apply such an invariance to prove the cycle symmetry for diffusion processes on the circle. In Section 4, we show that the cycle dynamics of any diffusion process on the circle can be represented as a renewal random walk. Sections 5 is devoted to the limit theorems and large deviations for the sample circulations. Sections 6 is devoted to the applications of the cycle symmetry in nonequilibrium statistical physics, including various types of FTs for the sample circulations, net circulation, and entropy production rate. Section 7 is an appendix including some detailed proofs.

\section{Quasi-time-reversal invariance of Brownian motion}\label{invariance}
In this section, we work with the canonical version of one-dimensional Brownian motion. Let $\mathbb{W}$ denote the Winner space, that is, the set of all continuous paths with values in $\mathbb{R}$, let $P$ denote the Winner measure, and let $\mathscr{F}$ denote the Borel $\sigma$-algebra on $\mathbb{W}$ completed with respect to $P$. Let $W = \{W_t: t\geq 0\}$ be the coordinate process on $\mathbb{W}$ defined as $W_t(w) = w(t)$. Let $\{\mathscr{F}_t:t\geq 0\}$ be the natural filtration of $W$ completed with respect to $P$. Then $W$ is a standard Brownian motion defined on the filtered space $(\mathbb{W},\mathscr{F},\{\mathscr{F}_t\},P)$ satisfying the usual conditions. For each $w\in\mathbb{W}$, let
\begin{equation}\label{tau}
\tau(w) = \inf\{t\geq 0: |w(t)| = 1\}
\end{equation}
be the hitting time of $\{-1,1\}$ by $w$ and let
\begin{equation}\label{gtau}
g_\tau(w) = \sup\{0\leq t<\tau(w): w(t) = 0\}
\end{equation}
be the last zero of $w$ before $\tau(w)$.
\begin{definition}\label{reversal2}
The quasi-time-reversal is a transformation $\phi$ on $\mathbb{W}$ defined as
\begin{equation*}
\phi(w)(t) =
\begin{cases}
w(t), &\textrm{if}\;0\leq t<g_\tau(w),\\
w(g_\tau+\tau-t)-1, &\textrm{if}\;w(\tau)=1\;\textrm{and}\;g_\tau(w)\leq t<\tau(w), \\
w(g_\tau+\tau-t)+1, &\textrm{if}\;w(\tau)=-1\;\textrm{and}\;g_\tau(w)\leq t<\tau(w), \\
w(t)-2, &\textrm{if}\;w(\tau)=1\;\textrm{and}\;t\geq\tau(w), \\
w(t)+2, &\textrm{if}\;w(\tau)=-1\;\textrm{and}\;t\geq\tau(w),
\end{cases}\;\;\;\forall w\in\mathbb{W}.
\end{equation*}
\end{definition}

The definition of the quasi-time-reversal is somewhat complicated at first glance, but its intuitive implication is rather clear. Under the quasi-time-reversal, a continuous path $w$ is taken time reversal between $g_{\tau}(w)$ and $\tau(w)$ and is continuously spliced when needed (see Figure \ref{reversal}).
\begin{figure}[!htb]
\begin{center}
\centerline{\includegraphics[width=0.5\textwidth]{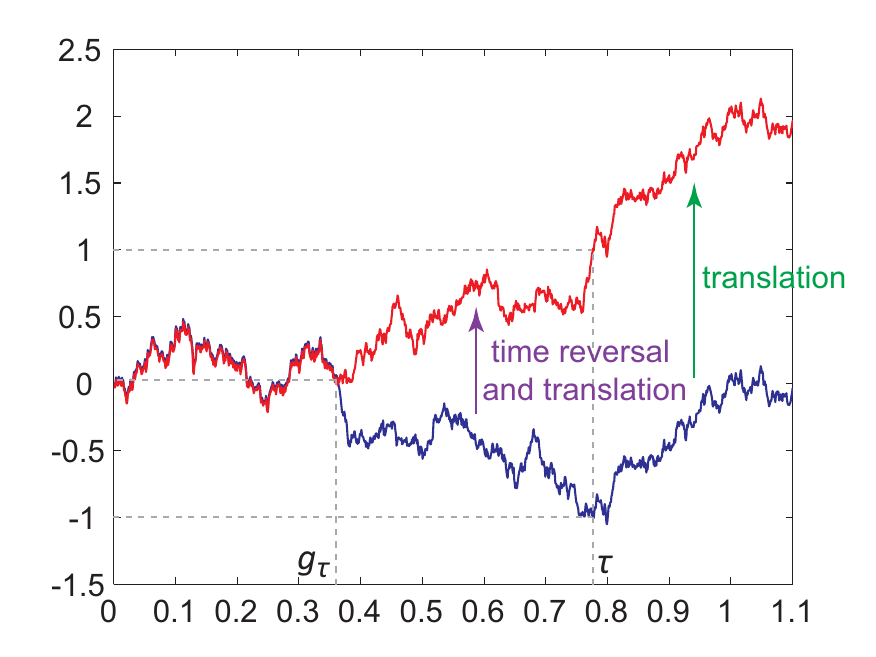}}
\caption{\textbf{A Brownian path and its quasi-time-reversal}. The blue curve represents a Brownian path and the red curve represents the transformed path under the quasi-time-reversal $\phi$. During $[0,g_\tau]$, the two curves coincide. During $[g_\tau,\tau]$, the blue curve is reversed in time and is translated to make the red curve continuous. During $[\tau,\infty)$, the blue curve is translated to make the red curve continuous.}\label{reversal}
\end{center}
\end{figure}

In fact, the expression of the quasi-time-reversal becomes much simpler if it is regarded as a transformation on $\mathbb{W}(S^1)$. Specifically, let $S^1 = \{z\in\mathbb{C}: |z|=1\}$ denote the unit circle in the complex plane. Let $\mathbb{W}(S^1)$ denote the Winner space on $S^1$, that is, the set of all continuous paths with values in $S^1$. Then the map $w\mapsto e^{2\pi iw}$ gives a one-to-one correspondence between $\mathbb{W}$ and $\mathbb{W}(S^1)$. Let $w$ be a path in $\mathbb{W}(S^1)$. Under this one-to-one correspondence, $\tau(w)$ is the time needed for $w$ to form a cycle for the first time and $g_{\tau}(w)$ is the last exit time of $1$ by $w$ before $\tau(w)$. Moreover, the quasi-time-reversal can be viewed as a transformation $\phi$ on $\mathbb{W}(S^1)$ defined as
\begin{equation*}
\phi(w)(t) =
\begin{cases}
w(t), &\textrm{if}\;0\leq t<g_\tau(w),\\
w(g_\tau+\tau-t), &\textrm{if}\;g_\tau(w)\leq t<\tau(w), \\
w(t), &\textrm{if}\;t\geq\tau(w),
\end{cases}\;\;\;\forall w\in\mathbb{W}(S^1).
\end{equation*}

The following theorem shows that the Brownian motion is invariant under the quasi-time-reversal.
\begin{theorem}\label{quasitimereversal}
Let $W$ be a standard one-dimensional Brownian motion. Let $\phi$ be the quasi-time-reversal on $\mathbb{W}$. Then $\phi(W)$ is also a standard one-dimensional Brownian motion.
\end{theorem}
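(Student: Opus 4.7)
The plan is to decompose $W$ at the two random times $g_\tau$ and $\tau$ into three stochastically independent pieces, observe that $\phi$ acts as the identity on the first and third and as a ``time-reverse plus shift'' on the middle, and then invoke the intrinsic time-reversal symmetry of Brownian excursions to conclude that this action preserves the law of the middle piece. The principal tools will be the strong Markov property, It\^{o}'s excursion theory, and the reversibility of Brownian bridges.

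Concretely, writing $g := g_\tau$, $\ell := \tau - g$, and $\epsilon := W_\tau \in \{\pm 1\}$, I would split the path into
\begin{equation*}
W^{(1)} = (W_t)_{0 \le t \le g}, \quad W^{(2)} = (W_{g+s})_{0 \le s \le \ell}, \quad W^{(3)} = (W_{\tau+u} - W_\tau)_{u \ge 0}.
\end{equation*}
The strong Markov property at the stopping time $\tau$ makes $W^{(3)}$ a standard Brownian motion independent of $\mathscr{F}_\tau \supset \sigma(W^{(1)}, W^{(2)})$. For the remaining independence $W^{(1)} \perp W^{(2)}$ I would use It\^{o}'s excursion theory: view the excursions of $W$ away from $0$ as a Poisson point process and thin at the event $A = \{e : e \text{ hits } \pm 1\}$, which has finite It\^{o} mass; then $W^{(2)}$ (truncated at its first hit of $\pm 1$) is the mark of the first $A$-point, while $W^{(1)}$ is reconstructed from the independent $A^c$-thinning. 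A direct unrolling of the definition of $\phi$ then shows that $\phi(W)$ is glued together from the triple $(W^{(1)}, \widetilde{W}^{(2)}, W^{(3)})$ by exactly the same rule that reconstructs $W$, where
\begin{equation*}
\widetilde{W}^{(2)}_s := W^{(2)}_{\ell - s} - \epsilon, \qquad 0 \le s \le \ell;
\end{equation*}
in particular the times $g_\tau$ and $\tau$ and the post-$\tau$ increments are preserved, and only the exit sign is flipped to $-\epsilon$. Since $\widetilde{W}^{(2)}$ is a deterministic functional of $W^{(2)}$, the theorem reduces to proving $\widetilde{W}^{(2)} \stackrel{d}{=} W^{(2)}$.

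This last distributional identity is the main obstacle. By reflection symmetry of $W$ in $0$, $P(\epsilon = \pm 1) = 1/2$ and the $\{\epsilon = -1\}$ conditional law of $W^{(2)}$ is the sign-flip of the $\{\epsilon = +1\}$ law, so it suffices to check that, on $\{\epsilon = +1\}$, the law of $W^{(2)}$ is invariant under the involution $e \mapsto 1 - e(\ell - \cdot)$. Given $\{\epsilon = +1, \ell\}$, $W^{(2)}$ is a Brownian bridge from $0$ to $1$ of duration $\ell$ conditioned to stay in $(0, 1)$; equivalently, by Williams' decomposition, a three-dimensional Bessel process from $0$ conditioned on its hitting time of $1$ being $\ell$. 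The involution ``time-reverse then reflect about $\tfrac{1}{2}$'' preserves the Gaussian bridge density, swaps the endpoints $0$ and $1$ back to themselves, preserves the conditioning event $\{e(s) \in (0, 1)\}$ because the interval $(0, 1)$ is symmetric about $\tfrac{1}{2}$, and fixes $\ell$; hence the conditional invariance holds and lifts to the unconditional level. Combining this with the independence from the previous step then yields $\phi(W) \stackrel{d}{=} W$. The delicate points to watch are the precise bridge description of $W^{(2)}$ given $\{\epsilon = +1, \ell\}$ and the joint handling of the random duration $\ell$, for which Williams' decomposition should provide the cleanest inputs.
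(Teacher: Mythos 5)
Your proposal is correct and follows essentially the same route as the paper: decompose $W$ at $g_\tau$ and $\tau$ into three independent pieces via excursion theory and the strong Markov property, observe that $\phi$ acts nontrivially only on the middle piece, and reduce the claim to a time-reversal symmetry of that middle excursion. The paper packages the last step as the unconditional time-reversal identity for $\mathrm{BES}^3(0)$ stopped at its first hit of $1$ (Lemma~\ref{Bessel}), after first identifying the middle piece as a randomly signed $\mathrm{BES}^3(0)$ run to $\pm 1$ (Lemma~\ref{median}); you instead condition on the duration $\ell$ and argue through the bridge-conditioned-to-stay-in-$(0,1)$ picture, a null-event conditioning whose rigorous treatment you yourself flag as the delicate point and which the $\mathrm{BES}^3$ formulation sidesteps.
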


In order to prove the above theorem, we need some knowledge on Brownian excursion theory. We next introduce some notations. For each $w\in\mathbb{W}$, let
\begin{equation*}
R(w) = \inf\{t>0: w(t) = 0\}
\end{equation*}
be the first zero of $w$ after time $0$. Let $\mathbb{U}$ be the set of all continuous paths $w$ such that $0<R(w)<\infty$ and $w(t)=0$ for each $t\geq R(w)$. Let $\mathscr{U}$ be the Borel $\sigma$-algebra on $\mathbb{U}$. In addition, let $n$ be the It\^{o} measure. Then $n$ is a measure on the measurable space $(\mathbb{U},\mathscr{U})$. For the rigorous definition of the It\^{o} measure $n$, please refer to \cite[Chapter XII]{revuz1999continuous}.

For each $w\in\mathbb{W}$, let $i_0(w)$ be the element $u\in\mathbb{U}$ such that
\begin{equation*}
u(t) =
\begin{cases}
w(t), &\textrm{if}\;t<R(w),\\
0, &\textrm{if}\;t\geq R(w).
\end{cases}
\end{equation*}
For each $t\geq 0$, let $i_t(w) = i_0(\theta_t(w))$, where $\theta_t$ is the shift operator defined as $\theta_t(w)(s)=w(t+s)$, and let
\begin{equation*}
g_t(w) = \sup\{0\leq s<t: w(s) = 0\}
\end{equation*}
be the last zero of $w$ before $t$. It is easy to see that $i_{g_t}$ is the excursion straddling the given time $t$.

The next lemma gives the structure of the Brownian motion $W$ between $g_\tau$ and $\tau$. In this paper, a three-dimensional Bessel process starting from $x$ will be abbreviated as BES$^3$($x$).
\begin{lemma}\label{median}
Let $\rho$ and $\tilde\rho$ be two independent BES$^3$(0). Let $\xi$ be a Bernoulli random variable independent of $\rho$ and $\tilde\rho$ with $P(\xi = 0) = P(\xi = 1) = 1/2$. Let $H$ be a process defined as
\begin{equation*}
H_t =
\begin{cases}
\rho_t, &\textrm{if}\;\xi = 0,\\
-\tilde\rho_t, &\textrm{if}\;\xi = 1.
\end{cases}
\end{equation*}
Let $\beta = \inf\{t\geq 0: |H_t|=1\}$. Then the processes $\{W_{g_\tau+t}:0\leq t\leq\tau-g_\tau\}$ and $\{H_t:0\leq t\leq\beta\}$ have the same distribution.
\end{lemma}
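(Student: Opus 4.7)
My plan is to compute the law of the excursion $i_{g_\tau}$ of $W$ straddling $\tau$ via the Ito measure, decompose it according to the sign of the excursion, and identify each of the two resulting pieces with a Bessel fragment by means of Lemma \ref{Williams}. Only the path on $[g_\tau,\tau]$ matters, so the behavior of each excursion after its first hit of $\pm 1$ can be discarded.

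First I would apply Lemma \ref{excursion} with $C=\{-1,1\}$ and $T=\tau$; this gives that $i_{g_\tau}$ is independent of $\mathscr{F}_{g_\tau}$ and has law $n(\,\cdot\mid\tau<R)$. Under $n$, every excursion is either everywhere positive or everywhere negative on $(0,R)$, so writing $n=n_++n_-$ for this sign decomposition and letting $\tau_{\pm 1}$ denote the hitting times of $\pm 1$ under the canonical process, the invariance of $n$ under $u\mapsto -u$, together with the identities $\tau=\tau_1$ on positive excursions and $\tau=\tau_{-1}$ on negative ones, yields
\begin{equation*}
n(\,\cdot\mid\tau<R)\;=\;\tfrac12\,n_+(\,\cdot\mid\tau_1<R)\;+\;\tfrac12\,n_-(\,\cdot\mid\tau_{-1}<R).
\end{equation*}
In particular, the event $\{W_\tau=1\}$ is precisely the event that $i_{g_\tau}$ is positive and has probability $1/2$.

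Next I would identify the two halves. Applying Lemma \ref{excursion} once more with $C=\{1\}$ and $T$ the hitting time of $1$ by $W$, the excursion $i_{g_T}$ has law $n(\,\cdot\mid T<R)=n_+(\,\cdot\mid\tau_1<R)$, since reaching $1$ forces positivity. Lemma \ref{Williams} then shows that the restriction of such an excursion to $[0,T-g_T]$ is distributed as $(\rho_t)_{0\le t\le\alpha}$. Consequently the conditional law of $(W_{g_\tau+t})_{0\le t\le\tau-g_\tau}$ on $\{W_\tau=1\}$ is that of $(\rho_t)_{0\le t\le\alpha}$; by the spatial symmetry $W\mapsto -W$ of Brownian motion, the analogous conditional law on $\{W_\tau=-1\}$ is that of $(-\tilde\rho_t)_{0\le t\le\tilde\alpha}$. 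Stitching these together with an independent Bernoulli $\xi$ of parameter $1/2$ exactly reproduces the law of $(H_t)_{0\le t\le\beta}$.

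The main technical care lies in the sign decomposition $n=n_++n_-$ and the fact that $n_+$ and $n_-$ are mutual reflections of one another; both are standard properties of the Brownian excursion measure (see Revuz–Yor), but they must be invoked cleanly because the whole $1/2,1/2$ splitting rests on them. Once this is settled, the argument is essentially a bookkeeping exercise matching Lemma \ref{excursion} applied to the two stopping times $\tau$ and $T$ with the Williams decomposition of Lemma \ref{Williams}.
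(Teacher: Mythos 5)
Your proposal is correct and follows essentially the same route as the paper: condition via Lemma \ref{excursion} to express the law of $i_{g_\tau}$ through the Ito measure, split by sign and use the reflection invariance of $n$ to get the $1/2$–$1/2$ weights, identify the positive half with the excursion straddling the hitting time of $1$ by another application of Lemma \ref{excursion}, and invoke Lemma \ref{Williams} to recognize the Bessel fragment. The only cosmetic difference is that the paper pins down the weights explicitly via Lemma \ref{maximum} (computing $n(\tau<R)=1$ and $n(T<R)=1/2$), whereas you obtain the same $1/2$ splitting directly from the reflection symmetry of $n$; both arguments are equivalent.
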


\begin{proof}
Let $T$ and $S$ be the hitting times of $1$ and $-1$ by $W$, respectively. Obviously $R$, $S$, $T$, and $\tau$ can be viewed as defined on $\mathbb{U}$. For each $u\in\mathbb{U}$, let $M(u) = \max_{t\leq R(u)}u(t)$. It is a classical result \cite[Chapter XII, Proposition 3.6]{revuz1999continuous} that for any $x>0$,
\begin{equation*}
n(M\geq x) = \frac{1}{2x}.
\end{equation*}
This shows that
\begin{equation}\label{maximum}
n(\tau<R) = n(T<R)+n(S<R) = 2n(T<R) = 2n(M\geq 1) = 1.
\end{equation}
Let $C$ be a nonempty closed set in $\mathbb{R}$ that does not contain 0. Let $T_C$ be the hitting time of $C$ by $W$ and let
\begin{equation*}
g_{T_c}(w) = \sup\{0\leq s<T_c: w(s) = 0\}
\end{equation*}
be the last zero of $w$ before $T_C$. It is a classical result \cite[Chapter XII, Proposition 3.5]{revuz1999continuous} that for any nonnegative measurable function $F$ on $(\mathbb{U},\mathscr{U})$,
\begin{equation}\label{excursion}
E[F(i_{g_{T_C}})|\mathscr{F}_{g_{T_C}}\} = n(F1_{\{T_C<R\}})/n(T_C<R).
\end{equation}
It follows from \eqref{maximum} and \eqref{excursion} that for any $\Gamma\in\mathscr{U}$,
\begin{equation*}
\begin{split}
& P(i_{g_{\tau}}\in\Gamma) = n(\Gamma,\tau<R) = n(\Gamma,T<R)+n(\Gamma,S<R) = n(\Gamma,T<R)+n(-\Gamma,T<R).
\end{split}
\end{equation*}
and that
\begin{equation*}
n(\Gamma,T<R) = n(T<R)P(i_{g_T}\in\Gamma) = \frac{1}{2}P(i_{g_T}\in\Gamma).
\end{equation*}
Thus we have
\begin{equation*}
P(i_{g_{\tau}}\in\Gamma) = \frac{1}{2}[P(i_{g_T}\in\Gamma)+P(i_{g_T}\in-\Gamma)].
\end{equation*}
Let $B$ and $\tilde{B}$ be two independent standard Brownian motions defined on some probability space $(\Omega,\mathscr{F}',P')$ and let $T_1$ and $\tilde{T}_1$ be their hitting times of $1$, respectively. Let $e$ and $\tilde{e}$ be the excursions of $B$ and $\tilde{B}$ straddling $T_1$ and $\tilde{T}_1$, respectively. Let $\eta$ be a Bernoulli random variable independent of $B$ and $\tilde{B}$ with $P'(\eta = 0) = P'(\eta = 1) = 1/2$. Let $J$ be the process defined as
\begin{equation*}
J =
\begin{cases}
e, &\textrm{if}\;\eta = 0,\\
-\tilde{e}, &\textrm{if}\;\eta = 1.
\end{cases}
\end{equation*}
Then we have
\begin{equation*}
P'(J\in\Gamma) = \frac{1}{2}[P'(e\in\Gamma)+P'(\tilde{e}\in-\Gamma)] = \frac{1}{2}[P(i_{g_T}\in\Gamma)+P(i_{g_T}\in-\Gamma)] = P(i_{g_\tau}\in\Gamma).
\end{equation*}
This shows that $i_{g_{\tau}}$ and $J$ have the same distribution. Thus the processes $\{W_{g_\tau+t}: 0\leq t\leq \tau-g_{\tau}\}$ and $\{J_t: 0\leq t\leq\gamma\}$ have the same distribution, where $\gamma$ is the hitting time of $\{-1,1\}$ by $J$. By Williams' Brownian paths decomposition theorem \cite{williams1970decomposing, williams1974path}, the excursion $e$ before reaching 1 is a BES$^{3}$(0) before reaching 1. Thus the processes $\{J_t: 0\leq t\leq\gamma\}$ and $\{H_t:0\leq t\leq\beta\}$ have the same distribution.
\end{proof}

\begin{lemma}\label{independent}
The processes $\{W_t: 0\leq t\leq g_\tau\}$, $\{W_{g_\tau+t}: 0\leq t\leq\tau-g_\tau\}$, and $\{W_{\tau+t}-W_{\tau}: t\geq 0\}$ are independent.
\end{lemma}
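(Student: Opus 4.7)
The plan is to derive the three-way independence from two separate two-way independences. Write $X_1 = \{W_t: 0\leq t\leq g_\tau\}$, $X_2 = \{W_{g_\tau+t}: 0\leq t\leq \tau - g_\tau\}$, and $X_3 = \{W_{\tau+t} - W_\tau: t\geq 0\}$ for the three segments in question.

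First, I would invoke the strong Markov property at the stopping time $\tau$. Since $\tau$ is an $\{\mathscr{F}_t\}$-stopping time (it is the hitting time of the closed set $\{-1,+1\}$ by a continuous process), the process $X_3$ is a standard Brownian motion independent of $\mathscr{F}_\tau$. Since $g_\tau \leq \tau$ and both $X_1$ and $X_2$ are recoverable from the path $\{W_s: 0\leq s\leq \tau\}$ (together with the random time $g_\tau$, which is itself $\mathscr{F}_\tau$-measurable), the pair $(X_1, X_2)$ is $\mathscr{F}_\tau$-measurable. Hence $(X_1, X_2)$ is independent of $X_3$.

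Second, I would apply Lemma \ref{excursion} with $C = \{-1,+1\}$, for which the associated hitting time is exactly $T = \tau$. The lemma then says that the excursion $i_{g_\tau}$ straddling $\tau$ is independent of $\mathscr{F}_{g_\tau}$. Now $X_2$ is nothing but the initial segment of $i_{g_\tau}$ stopped when it first reaches $\pm 1$, hence a deterministic measurable functional of $i_{g_\tau}$; in particular, $X_2$ is independent of $\mathscr{F}_{g_\tau}$. Since $X_1$ is $\mathscr{F}_{g_\tau}$-measurable, this yields $X_1 \perp X_2$. Combining with the previous paragraph, $(X_1,X_2) \perp X_3$ and $X_1 \perp X_2$ together force the joint law to factor as $P_{X_1}\otimes P_{X_2}\otimes P_{X_3}$, which is mutual independence.

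The main obstacle is purely a measurability one: since $g_\tau$ is not a stopping time, the $\sigma$-algebra $\mathscr{F}_{g_\tau}$ appearing in Lemma \ref{excursion} must be read in the sense used in the excursion theory of Revuz and Yor (via progressive/optional projections at honest times), and one has to verify that $X_1$ truly lies in it and that $X_2$ depends only on $i_{g_\tau}$, not separately on $g_\tau$. Both verifications are standard but delicate, so I would appeal to the corresponding results in Revuz and Yor \cite{revuz1999continuous} rather than rebuild the excursion framework from scratch.
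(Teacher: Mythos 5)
Your proposal is correct and follows essentially the same two-step argument as the paper: Lemma~\ref{excursion} gives independence of $i_{g_\tau}$ from $\mathscr{F}_{g_\tau}$ (hence $X_1 \perp X_2$), and the strong Markov property at $\tau$ gives $X_3 \perp (X_1,X_2)$, which together yield the full factorization. You supply more detail on the measurability technicalities (the honest time $g_\tau$, $X_2$ depending only on $i_{g_\tau}$), which the paper leaves implicit, but the route is the same.
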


\begin{proof}
In view of \eqref{excursion}, it is easy to see that $i_{g_\tau}$ is independent of $\mathscr{F}_{g_\tau}$. This shows that the excursion straddling the hitting time $\tau$ is independent of the past of the Brownian motion up to time $g_\tau$ (see also \cite[Page 492, Lines 1-3]{revuz1999continuous}). Thus the first and second processes are independent. By the strong regenerative property of the Brownian motion, the third process is independent of the first two processes. This completes the proof of this lemma.
\end{proof}

We are now in a position to prove Theorem \ref{quasitimereversal}.
\begin{proof}[Proof of Theorem \ref{quasitimereversal}]
By the definition of $\phi$, the processes $\{W_t: 0\leq t\leq g_\tau\}$ and $\{\phi(W)_t: 0\leq t\leq g_\tau\}$ are the same, and the processes $\{W_{\tau+t}-W_\tau: t\geq 0\}$ and $\{\phi(W)_{\tau+t}-\phi(W)_\tau: t\geq 0\}$ are also the same. By Lemma \ref{independent}, in order to prove that $\phi(W)$ is a Brownian motion, we only need to prove that the processes $\{W_{g_\tau+t}: 0\leq t\leq\tau-g_\tau\}$ and $\{\phi(W)_{g_\tau+t}: 0\leq t\leq\tau-g_\tau\}$ have the same distribution.

We continue to use the notations in Lemma \ref{median}. For any $0\leq t\leq\tau-g_\tau$,
\begin{equation*}
\phi(W)_{g_\tau+t} =
\begin{cases}
W_{\tau-t}-1 &\textrm{if}\; W_\tau = 1, \\
W_{\tau-t}+1 &\textrm{if}\; W_\tau = -1.
\end{cases}
\end{equation*}
Thus the processes $\{\phi(W)_{g_\tau+t}: 0\leq t\leq\tau-g_\tau\}$ and $\{(\rho_{\beta-t}-1)1_{\{\xi=0\}}+(1-\tilde\rho_{\beta-t})1_{\{\xi=1\}}: 0\leq t\leq\beta\}$ have the same distribution. Let $\alpha$ be the hitting time of $1$ by $\rho$. It is a classical result \cite[Chapter VII, Proposition 4.8]{revuz1999continuous} that the processes $\{\rho_t:0\leq t\leq\alpha\}$ and $\{1-\rho_{\alpha-t}:0\leq t\leq\alpha\}$ have the same distribution. This suggests that the processes $\{(\rho_{\beta-t}-1)1_{\{\xi=0\}}+(1-\tilde\rho_{\beta-t})1_{\{\xi=1\}}: 0\leq t\leq\beta\}$ and $\{-\rho_t1_{\{\xi=0\}}+\tilde\rho_t1_{\{\xi=1\}}: 0\leq t\leq\beta\}$ have the same distribution. By Lemma \ref{median}, the processes $\{W_{g_\tau+t}: 0\leq t\leq\tau-g_\tau\}$ and $\{-\rho_t1_{\{\xi=0\}}+\tilde\rho_t1_{\{\xi=1\}}: 0\leq t\leq\beta\}$ have the same distribution. This shows that the processes $\{W_{g_{\tau}+t}: 0\leq t\leq\tau-g_{\tau}\}$ and $\{\phi(W)_{g_{\tau}+t}: 0\leq t\leq\tau-g_{\tau}\}$ have the same distribution.
\end{proof}

\begin{remark}
For any given $x>0$, if we replace $\tau$ by the hitting time of $\{x,-x\}$ and change the definition of the quasi-time-reversal $\phi$ correspondingly, then the result in Theorem \ref{quasitimereversal} still holds.
\end{remark}

\section{Cycle symmetry for diffusion processes on the circle}\label{cyclesymmetry}
In this section, we shall use the quasi-time-reversal invariance of the Brownian motion to prove the cycle symmetry for diffusion processes on the circle.

Let $X = \{X_t: t\geq 0\}$ be a one-dimensional diffusion process with diffusion coefficient $a:\mathbb{R}\rightarrow(0,\infty)$ and drift $b:\mathbb{R}\rightarrow\mathbb{R}$. The initial distribution of $X$ can be arbitrary. We assume that $a$ and $b$ are continuous periodic functions:
\begin{equation*}
a(x+1) = a(x),\;\;\;b(x+1) = b(x),\;\;\;\forall x\in\mathbb{R}.
\end{equation*}
Due to the periodicity, $a$ and $b$ can be viewed as defined on $S^1$ and $X$ can be viewed as a diffusion process on $S^1$. In the sequel, we shall use the symbol $X$ to denote both the diffusion process on $S^1$ and its lifted process on $\mathbb{R}$. The specific implication of $X$ should be clear from the context. We shall construct the diffusion process $X$ as the weak solution to the stochastic differential equation
\begin{equation}\label{SDE}
dX_t = b(X_t)dt+\sigma(X_t)dW_t,
\end{equation}
where $\sigma = a^{1/2}$ and $W$ is a standard Brownian motion defined on some filtered space $(\Omega,\mathscr{G},\mathscr{G}_t,P)$. Since $a$ and $b$ are continuous periodic functions and $a>0$, the Stroock-Varadhan uniqueness theorem \cite[Theorem 7.2.1]{stroock2006multidimensional} ensures that the martingale problem for $(a,b)$ is well-posed. By the equivalence between the martingale-problem formulation and the weak-solution formulation \cite[Theorem 20.1]{rogers2000diffusions2}, the weak solution to \eqref{SDE} always exists and is unique in law.

Recall that the potential function $U$ and scale function $s$ of $X$ are defined as
\begin{equation*}
U(x) = -2\int_0^x\frac{b(y)}{\sigma^2(y)}dy,\;\;\;s(x) = \int_0^xe^{U(y)}dy,
\end{equation*}
respectively. In addition, recall that the distribution $\mu_Y$ of any real-valued continuous process $Y = \{Y_t:t\geq 0\}$ is a probability measure on $\mathbb{W}$ defined as
\begin{equation*}
\mu_Y(A) = P(Y\in A),\;\;\;\forall A\in\mathscr{F}.
\end{equation*}

\begin{definition}\label{formingtimes}
The cycle forming time $T$ of $X$ is defined as
\begin{equation*}
T = \inf\{t\geq 0: |X_t-X_0| = 1\}.
\end{equation*}
If $X_T-X_0 = 1$, then we say that $X$ forms the forward cycle at time $T$. If $X_T-X_0 = -1$, then we say that $X$ forms the backward cycle at time $T$.
\end{definition}

\begin{definition}
For each $n\geq 1$, the $n$th cycle forming time $T_n$ of $X$ is defined as
\begin{equation*}
T_n = \inf\{t\geq T_{n-1}: |X_t-X_{T_{n-1}}| = 1\},
\end{equation*}
where we assume that $T_0 = 0$. If $X_{T_n}-X_{T_{n-1}} = 1$, then we say that $X$ forms the forward cycle at time $T_n$. If $X_{T_n}-X_{T_{n-1}} = -1$, then we say that $X$ forms the backward cycle at time $T_n$.
\end{definition}

Intuitively, $T_n$ is the time needed for $X$ to form a cycle for the $n$th time and $T = T_1$. Under our assumptions, all these cycle forming times $T_n$ are finite a.s. \cite[Chapter VII, Exercise 3.21]{revuz1999continuous}.

\begin{definition}
Let $\nu^+ = \inf\{n\geq 1: \textrm{$X$ forms the forward cycle at time $T_n$}\}$ and let $\nu^- = \inf\{n\geq 1: \textrm{$X$ forms the backward cycle at time $T_n$}\}$. Then the forming time $T^+$ of the forward cycle is defined as $T^+ = T_{\nu^+}$ and the forming time $T^-$ of the backward cycle is defined as $T^- = T_{\nu^-}$.
\end{definition}

Intuitively, $T^+$ is the time needed for $X$ to form a forward cycle for the first time and $T^-$ is the time needed for $X$ to form a backward cycle for the first time. It is clear that $T = T^+\wedge T^-$.

\begin{theorem}\label{general}
Let $X$ be a diffusion process solving the stochastic differential equation \eqref{SDE} with any initial distribution, where $b:\mathbb{R}\rightarrow\mathbb{R}$ and $\sigma:\mathbb{R}\rightarrow(0,\infty)$ are continuous functions with period 1. Then (i) for any $u>0$,
\begin{equation*}
\frac{P(T^+\leq u,T^+<T^-)}{P(T^-\leq u,T^-<T^+)} = \frac{P(T^+<T^-)}{P(T^-<T^+)} = \exp\left(2\int_0^1\frac{b(y)}{\sigma^2(y)}dy\right);
\end{equation*}
(ii) for any $u\geq 0$,
\begin{equation}\label{Haldane}
P(T^+\leq u|T^+<T^-) = P(T^-\leq u|T^-<T^+).
\end{equation}
\end{theorem}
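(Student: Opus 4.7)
The strategy is to reduce the diffusion $X$ to a standard Brownian motion via Girsanov's theorem followed by the Dambis--Dubins--Schwarz time change, and then invoke Theorem~\ref{quasitimereversal}. A harmless translation lets me assume $X_0=0$. Let $\mu(x):=b(x)/\sigma(x)$, which is bounded because $b,\,1/\sigma$ are continuous and periodic, and define the exponential martingale $Z_t=\exp\!\bigl(-\int_0^t\mu(X_s)\,dW_s-\tfrac12\int_0^t\mu^2(X_s)\,ds\bigr)$. For any fixed $u>0$, Novikov's criterion makes $Z_{\cdot\wedge u}$ a genuine martingale, so $dQ/dP=Z_{T\wedge u}$ defines a probability $Q$ on $\mathscr{F}_{T\wedge u}$. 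Under $Q$, the process $X$ satisfies $dX=\sigma(X)\,d\tilde W$ with $\tilde W$ a $Q$-Brownian motion, and the time change $A_t=\int_0^t\sigma^2(X_s)\,ds$ renders $\tilde X_t:=X_{A^{-1}(t)}$ a standard $Q$-Brownian motion starting at $0$.

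Next I would put $Z_T^{-1}$ into pathwise form by applying It\^o's formula to $F(x):=\int_0^x b(y)/\sigma^2(y)\,dy$. A short calculation yields
\begin{equation*}
Z_T^{-1}=e^{F(X_T)-F(X_0)}\exp\!\Bigl(-\tfrac12\int_0^T\Psi(X_s)\,ds\Bigr),\qquad \Psi(x):=(b/\sigma^2)'(x)\,\sigma^2(x)+(b/\sigma)^2(x),
\end{equation*}
where $\Psi$ has period $1$. Periodicity of $b/\sigma^2$ gives $F(-1)=-F(1)$, so with $\gamma:=2\int_0^1 b/\sigma^2\,dy$ one has $F(X_T)-F(X_0)=\pm\gamma/2$ on $\{X_T=\pm 1\}$. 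After the time change, both $T=\int_0^{\tilde T}\sigma^{-2}(\tilde X_r)\,dr$ and $\int_0^T\Psi(X_s)\,ds=\int_0^{\tilde T}(\Psi/\sigma^2)(\tilde X_r)\,dr$ take the form $\int_0^{\tilde T}f(\tilde X_r)\,dr$ with $f$ period-$1$. Hence
\begin{equation*}
P(T\leq u,\,X_T=\pm 1)=E_Q[Z_T^{-1}\mathbf{1}_{\{T\leq u,\,X_T=\pm 1\}}]=e^{\pm\gamma/2}\,E_Q\!\bigl[H(\tilde X)\,\mathbf{1}_{\{K(\tilde X)\leq u,\,\tilde X_{\tilde T}=\pm 1\}}\bigr]
\end{equation*}
for such period-$1$ path functionals $H,K$ of $\tilde X$ on $[0,\tilde T]$.

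It therefore suffices to prove the two $Q$-expectations coincide. Since $\tilde X$ is a $Q$-Brownian motion, Theorem~\ref{quasitimereversal} gives $\phi(\tilde X)\stackrel{d}{=}\tilde X$ under $Q$. The crucial observations are that $\phi$ preserves both $\tilde T$ and the last zero $g_{\tilde T}$, swaps $\{\tilde X_{\tilde T}=1\}$ with $\{\tilde X_{\tilde T}=-1\}$, and leaves every integral $\int_0^{\tilde T}f(\tilde X_r)\,dr$ with $f$ period-$1$ \emph{invariant} -- because $\phi$ is the identity on $[0,g_{\tilde T}]$, while on $[g_{\tilde T},\tilde T]$ it merely time-reverses the path and shifts its values by exactly one period $\pm 1$, which is absorbed by periodicity of $f$, and the time-reversal contributes only a change of variable inside the integral. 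Performing $\tilde X\leadsto\phi(\tilde X)$ therefore equates the two expectations and yields the first equality of (i) with the constant ratio $e^{\gamma}$, independent of $u$. Letting $u\to\infty$ (or, equivalently, a direct gambler's-ruin computation with the scale function $s(x)=\int_0^x e^{-2\int_0^y b/\sigma^2}\,dy$, for which $-s(-1)/s(1)=e^{\gamma}$ by periodicity) gives the second equality, and (ii) is simply the first equality of (i) re-expressed as a conditional probability.

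The main obstacle is the $\phi$-invariance of period-$1$ path integrals: this is the one step that is not formal manipulation, and it is precisely where the one-period shift built into the quasi-time reversal must mesh with the period-$1$ structure of the SDE coefficients. Once that invariance is verified, Theorem~\ref{quasitimereversal} delivers the cycle symmetry essentially for free.
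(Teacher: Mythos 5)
Your reduction to the Brownian case is a genuinely different (and arguably cleaner) route than the paper's. The paper first removes $\sigma$ by the space transformation $f(x)=\int_0^x\sigma(y)^{-1}\,dy$ (Lemma~\ref{smooth}) and only then applies Girsanov's theorem to the resulting unit-diffusion SDE (Lemma~\ref{Haldanespecial}); you apply Girsanov first to strip off the drift and then remove $\sigma$ via a Dambis--Dubins--Schwarz time change. The $\phi$-invariance of period-$1$ path functionals of $\tilde X$ on $[0,\tilde T]$, which you correctly isolate as the one non-formal step, is the right mechanism: on $[g_{\tilde T},\tilde T]$ the quasi-time reversal time-reverses the excursion and shifts it by one period, so any $\int_0^{\tilde T}f(\tilde X_r)\,dr$ with $f$ period~$1$ is unchanged by a change of variables plus periodicity, while the endpoint indicator $\{\tilde X_{\tilde T}=\pm1\}$ flips; together with $F(\pm1)=\pm\gamma/2$ this yields the ratio $e^{\gamma}$ in one pass, where the paper needs two separate reductions.

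There is, however, a real gap relative to the stated hypotheses: the theorem assumes $b$ and $\sigma$ merely \emph{continuous}, whereas your It\^o step requires $F=\int_0^\cdot b/\sigma^2$ to be $C^2$, and your $\Psi$ contains $(b/\sigma^2)'$, which has no meaning in the purely continuous case. You have thus proved the result only for $b/\sigma^2\in C^1$. The paper closes exactly this gap by a separate approximation argument in the proof of Theorem~\ref{general}: choose smooth period-$1$ coefficients $b_n\to b$, $\sigma_n\to\sigma$ uniformly, invoke weak convergence of the laws $\mu_{X_n}\Rightarrow\mu_X$, and use $\mu_X(\partial A)=\mu_X(\partial B)=0$ (Lemma~\ref{boundary}) to pass the ratio to the limit. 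You would need to append this device, or replace It\^o by It\^o--Tanaka to get the pathwise identity for $Z_T^{-1}$ under lower regularity. A secondary and minor issue: $\tilde X_t=X_{A^{-1}(t)}$ is a $Q$-Brownian motion a priori only on the random interval $[0,A_u]$, so before citing Theorem~\ref{quasitimereversal} (a statement about a Brownian motion on $[0,\infty)$) you should extend $\tilde X$ past $A_u$ by an independent Brownian motion; this is harmless on $\{T\le u\}$, where $\tilde T=A_T\le A_u$, but it should be stated.
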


\begin{remark}
The above theorem shows that although the distributions of $T^+$ and $T^-$ may not be the same, their distributions, given that the corresponding cycle is formed earlier than its reversed cycle, are the same. The equality \eqref{Haldane}, which characterizes the symmetry of the forming times of the forward and backward cycles for diffusion processes on the circle, will be named as the cycle symmetry in this paper.
\end{remark}

The proof of the cycle symmetry is divided into three steps. First, using the quasi-time-reversal invariance of the Brownian motion, we shall prove the result when $\sigma\equiv 1$ and $b$ is smooth. Second, using some transformation techniques, we shall prove the result when $\sigma$ and $b$ are both smooth. Third, using some approximation techniques, we shall prove the result when $\sigma$ and $b$ are both continuous.

\begin{lemma}\label{Girsanov}
Let $X$ be a diffusion process solving the stochastic differential equation
\begin{equation}\label{special}
dX_t = b(X_t)dt+dW_t,~~~X_0 = 0,
\end{equation}
where $b:\mathbb{R}\rightarrow\mathbb{R}$ is a $C^1$ function with period 1. Then for any $u\geq 0$ and $A\in\mathscr{F}_u$,
\begin{equation*}
\mu_X(A) = \int_A\exp\left(-\frac{1}{2}\left[U(w_u)+\int_0^u(b^2(w_t)+b'(w_t))dt\right]\right)d\mu_W(w),
\end{equation*}
where $U$ is the potential function of $X$.
\end{lemma}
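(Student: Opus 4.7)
The plan is to apply Girsanov's theorem in its standard form and then use Itô's formula to convert the resulting stochastic integral into the expression stated in the lemma. Since $b$ is $C^1$ and periodic, it is bounded (together with $b'$), so Novikov's condition is trivially satisfied and Girsanov applies without technical obstruction.

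Concretely, I would first invoke Girsanov's theorem on the filtered space $(\mathbb{W},\mathscr{F},\{\mathscr{F}_t\},\mu_W)$: under the probability measure $Q$ whose density with respect to $\mu_W$ on $\mathscr{F}_u$ is
\begin{equation*}
Z_u(w) = \exp\left(\int_0^u b(w_t)\,dw_t - \frac{1}{2}\int_0^u b(w_t)^2\,dt\right),
\end{equation*}
the coordinate process $W$ satisfies $dW_t = b(W_t)\,dt + dB_t$ for some $Q$-Brownian motion $B$. By uniqueness in law of the weak solution to \eqref{special}, $Q$ coincides with $\mu_X$ on $\mathscr{F}_u$, so $\mu_X(A) = \int_A Z_u\,d\mu_W$ for $A \in \mathscr{F}_u$.

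The main (and only) computational step is to rewrite the Itô integral $\int_0^u b(w_t)\,dw_t$ in terms of $U$ and an ordinary Lebesgue integral. Set $B(x) = \int_0^x b(y)\,dy$, so that by the definition \eqref{potential} with $\sigma \equiv 1$ one has $B(x) = -\tfrac{1}{2}U(x)$. Applying Itô's formula to $B(w_t)$ (which is $C^2$ because $b \in C^1$) on $[0,u]$ yields
\begin{equation*}
B(w_u) = \int_0^u b(w_t)\,dw_t + \frac{1}{2}\int_0^u b'(w_t)\,dt,
\end{equation*}
using $w_0 = 0$ and $B(0) = 0$. Rearranging and substituting $B = -U/2$ gives
\begin{equation*}
\int_0^u b(w_t)\,dw_t = -\frac{1}{2}U(w_u) - \frac{1}{2}\int_0^u b'(w_t)\,dt.
\end{equation*}

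Plugging this identity into the expression for $Z_u$ collapses it exactly to the kernel appearing in the statement of the lemma, and this is the claim. There is no serious obstacle here: the only thing to verify carefully is that $b$ and $b'$ are bounded so Novikov's condition holds and Itô's formula applies, both of which are immediate from continuity and periodicity. The essential content of the lemma is therefore the algebraic identification of the Girsanov density with a pathwise functional involving the potential $U$, which is what will be needed to transfer the quasi-time reversal invariance of the Brownian motion to the diffusion $X$ in the subsequent steps.
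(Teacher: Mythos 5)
Your proof is correct and uses the same two ingredients as the paper's: Girsanov's theorem (justified by Novikov via boundedness of the periodic $b$) to obtain the Radon--Nikodym density with a stochastic integral, followed by Itô's formula applied to the antiderivative of $b$ (equivalently $-U/2$) to eliminate the stochastic integral in favor of $U(w_u)$ and $\int_0^u b'(w_t)\,dt$. The only cosmetic difference is the direction of the change of measure: you work directly on Wiener space, tilt $\mu_W$ to add the drift, and identify the result with $\mu_X$ by weak uniqueness, whereas the paper starts from the SDE solution, removes the drift to make $X$ a $Q$-Brownian motion, and then equates $E^{P}f(W)=E^{Q}f(X)$; your version is slightly more streamlined but the mathematical content is identical.
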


\begin{proof}
Since $b$ is a continuous periodic function, $b(X_t)$ is a bounded continuous adapted process. By Novikov's condition, the process
\begin{equation*}
M_t = \exp\left(-\int_0^tb(X_s)dW_s - \frac{1}{2}\int_0^tb(X_s)^2ds\right)
\end{equation*}
is a martingale. Let $Q$ be a probability measure defined as $dQ = M_udP$. By Girsanov's theorem, $\{X_t: 0\leq t\leq u\}$ is a standard Brownian motion under $Q$. Thus for any bounded measurable function $f$ on $C[0,u]$,
\begin{equation*}
\begin{split}
& E^Pf(W) = E^Pf(X)\frac{dQ}{dP} = E^Pf(X)\exp\left(\frac{1}{2}\int_0^uU'(X_t)dX_t+\frac{1}{2}\int_0^ub^2(X_t)dt\right).
\end{split}
\end{equation*}
Since $b$ is a $C^1$ function, $U$ is a $C^2$ function. By It\^{o}'s formula, we obtain that
\begin{equation*}
dU(X_t) = U'(X_t)dX_t+\frac{1}{2}U''(X_t)dt = U'(X_t)dX_t-b'(X_t)dt.
\end{equation*}
This shows that
\begin{equation*}
E^Pf(W) = E^Pf(X)\exp\left(\frac{1}{2}\left[U(X_u)+\int_0^u(b^2(X_t)+b'(X_t))dt\right]\right).
\end{equation*}
This implies the result of this lemma.
\end{proof}

We are now in a position to prove the cycle symmetry when $\sigma\equiv1$ and $b$ is smooth.
\begin{lemma}\label{Haldanespecial}
Let $X$ be a diffusion process solving the stochastic differential equation \eqref{special}, where $b:\mathbb{R}\rightarrow\mathbb{R}$ is a $C^1$ function with period 1. Then for any $u>0$,
\begin{equation*}
\frac{P(T\leq u,X_T=1)}{P(T\leq u,X_T=-1)} = \frac{P(X_T=1)}{P(X_T=-1)} = \exp\left(2\int_0^1b(y)dy\right).
\end{equation*}
\end{lemma}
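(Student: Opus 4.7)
The plan is to combine the Girsanov representation of Lemma \ref{Girsanov} with the quasi-time reversal invariance of Theorem \ref{quasitimereversal}. Since $X_0 = 0$, the cycle forming time $T$ of $X$ corresponds, under Girsanov, to the hitting time $\tau$ of $\pm 1$ by the path, and the events $\{T \leq u,\, X_T = \pm 1\}$ lie in $\mathscr{F}_u$. Writing
\begin{equation*}
F_u(w) = \tfrac{1}{2}\Bigl[U(w_u) + \int_0^u \bigl(b(w_t)^2 + b'(w_t)\bigr)\,dt\Bigr],
\end{equation*}
Lemma \ref{Girsanov} gives, for $\varepsilon \in \{+1,-1\}$,
\begin{equation*}
P(T \leq u,\, X_T = \varepsilon) \;=\; E\bigl[\mathbf{1}_{\{\tau \leq u,\, W_\tau = \varepsilon\}}\, e^{-F_u(W)}\bigr].
\end{equation*}

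The heart of the argument is to track the behaviour of this expectation under the quasi-time reversal $\phi$. Set $\gamma = 2\int_0^1 b(y)\,dy$. From $U'(x) = -2b(x)$ and the periodicity of $b$ one has $U(x \pm 1) = U(x) \mp \gamma$, and hence $U(x \pm 2) = U(x) \mp 2\gamma$. On $\{\tau \leq u,\, W_\tau = 1\}$ the map $\phi$ shifts $W_u$ downward by $2$, so that $U(\phi(W)_u) = U(W_u) + 2\gamma$; on $\{\tau \leq u,\, W_\tau = -1\}$ it shifts $W_u$ upward by $2$, so that $U(\phi(W)_u) = U(W_u) - 2\gamma$. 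The integrand $b^2 + b'$ is $1$-periodic, and the only non-trivial action of $\phi$ on $[0,u]$ is a pure time reversal on $[g_\tau, \tau]$ together with integer translations elsewhere; a change of variable $s = g_\tau + \tau - t$ on the middle interval therefore yields
\begin{equation*}
\int_0^u \bigl(b^2 + b'\bigr)(\phi(W)_t)\,dt \;=\; \int_0^u \bigl(b^2 + b'\bigr)(W_t)\,dt.
\end{equation*}
Combining these two observations, $F_u(\phi(W)) = F_u(W) + \gamma$ on $\{\tau \leq u,\, W_\tau = 1\}$ and $F_u(\phi(W)) = F_u(W) - \gamma$ on $\{\tau \leq u,\, W_\tau = -1\}$.

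Finally I would use that $\tau(\phi(W)) = \tau(W)$ and $\phi(W)_\tau = -W_\tau$, so the events $\{\tau \leq u,\, W_\tau = \pm 1\}$ are swapped by $\phi$. Applying $\phi(W) \stackrel{d}{=} W$ from Theorem \ref{quasitimereversal} with the functional $\mathbf{1}_{\{\tau \leq u,\, W_\tau = 1\}} e^{-F_u(W)}$ gives
\begin{equation*}
E\bigl[\mathbf{1}_{\{\tau \leq u,\, W_\tau = 1\}} e^{-F_u(W)}\bigr] = E\bigl[\mathbf{1}_{\{\tau \leq u,\, W_\tau = -1\}} e^{-F_u(\phi(W))}\bigr] = e^\gamma E\bigl[\mathbf{1}_{\{\tau \leq u,\, W_\tau = -1\}} e^{-F_u(W)}\bigr],
\end{equation*}
which via the Girsanov identity becomes $P(T \leq u,\, X_T = 1) = e^\gamma P(T \leq u,\, X_T = -1)$. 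The second equality of the lemma follows by sending $u \to \infty$. The main technical obstacle I anticipate is the careful verification that the path-integral piece of $F_u$ is $\phi$-invariant and that $U(w_u)$ shifts by exactly $\pm 2\gamma$ on each of the two events, since it is precisely this bookkeeping that produces the factor $e^\gamma$ from what is, at a combinatorial level, the mere interchange of forward and backward cycles.
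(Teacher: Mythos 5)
Your argument is correct and is essentially the same as the paper's: both apply the Girsanov density of Lemma~\ref{Girsanov} and then use the quasi-time reversal invariance of Theorem~\ref{quasitimereversal} to move from the event $\{W_\tau=1\}$ to $\{W_\tau=-1\}$, tracking the change in $U(w_u)$ (a shift of $\mp 2\gamma$) and the invariance of $\int_0^u (b^2+b')(w_t)\,dt$ under $\phi$. The paper phrases the same step as a change of variables for the push-forward measure $\phi_*\mu_W$ on the sets $A,B \in \mathscr{F}_u$ together with the observation that $\phi$ is a bijection $A\leftrightarrow B$, whereas you phrase it in expectation form via the functional $\mathbf{1}_{\{\tau\le u,\,W_\tau=1\}}e^{-F_u(W)}$, but the bookkeeping is identical.
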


\begin{proof}
Let $\tau$ and $g_\tau$ be defined as in \eqref{tau} and \eqref{gtau}, respectively. For an arbitrarily fixed $u>0$, let $A$ and $B$ be two subsets of $\mathbb{W}$ defined as
\begin{equation}\label{subsets}
A = \{w\in\mathbb{W}: \tau(w)\leq u, w_{\tau}=1\},\;\;\;
B = \{w\in\mathbb{W}: \tau(w)\leq u, w_{\tau}=-1\}.
\end{equation}
Let $\phi$ be the quasi-time-reversal on $\mathbb{W}$. It follows from Theorem \ref{quasitimereversal} that $\phi_*\mu_W = \mu_W$, where $\phi_*\mu_W$ is the push-forward measure defined as $\phi_*\mu_W(\cdot) = \mu_W(\phi^{-1}(\cdot))$. Since $\phi$ is a one-to-one map from $B$ onto $A$, it follows from Lemma \ref{Girsanov} that
\begin{equation*}
\begin{split}
& \mu_X(A) = \int_A\exp\left(-\frac{1}{2}\left[U(w_u)+\int_0^up(w_t)dt\right]\right)d\phi_*\mu_W(w) \\
&= \int_B\exp\left(-\frac{1}{2}\left[U(\phi(w)_u)+\int_0^up(\phi(w)_t)dt\right]\right)d\mu_W(w).
\end{split}
\end{equation*}
where $p = b^2+b'$ is a continuous function with period 1. Since $b$ is a $C^1$ function with period 1, for any $w\in B$,
\begin{equation*}
\begin{split}
& U(\phi(w)_u) = U(w_u+2) = -2\int_0^{w_u}b(y)dy-2\int_{w_u}^{w_u+1}b(y)dy-2\int_{w_u+1}^{w_u+2}b(y)dy \\
&= -2\int_0^{w_u}b(y)dy-4\int_0^1b(y)dy = U(w_u)+2U(1).
\end{split}
\end{equation*}
Since $p$ is a continuous function with period 1, for any $w\in B$,
\begin{equation*}
\begin{split}
& \int_0^up(\phi(w)_t)dt = \int_0^{g_\tau} p(w_t)dt+\int_{g_\tau}^\tau p(w_{g_\tau+\tau-t}+1)dt+\int_\tau^u p(w_t+2)dt \\
&= \int_0^{g_\tau} p(w_t)dt+\int_{g_\tau}^\tau p(w_t)dt+\int_\tau^u p(w_t)dt = \int_0^u p(w_t)dt.
\end{split}
\end{equation*}
The above calculations show that
\begin{equation*}
\mu_X(A) = \int_B\exp\left(-\frac{1}{2}\left[U(w_u)+2U(1)+\int_0^up(w_t)dt\right]\right)d\mu_W(w)
= e^{-U(1)}\mu_X(B).
\end{equation*}
This shows that
\begin{equation*}
\frac{P(T\leq u,X_T=1)}{P(T\leq u,X_T=-1)} = \frac{\mu_X(A)}{\mu_X(B)} = e^{-U(1)}.
\end{equation*}
This completes the proof of this lemma
\end{proof}

We are now in a position to prove the cycle symmetry when $\sigma$ and $b$ are both smooth.
\begin{lemma}\label{smooth}
Let $X$ be a diffusion process solving the stochastic differential equation
\begin{equation}\label{SDEinitial}
dX_t = b(X_t)dt+\sigma(X_t)dW_t,\;\;\;X_0 = 0,
\end{equation}
where $b:\mathbb{R}\rightarrow\mathbb{R}$ and $\sigma:\mathbb{R}\rightarrow(0,\infty)$ are smooth functions with period 1. Then for any $u>0$,
\begin{equation*}
\frac{P(T\leq u,X_T=1)}{P(T\leq u,X_T=-1)} = \frac{P(X_T=1)}{P(X_T=-1)} = \exp\left(2\int_0^1\frac{b(x)}{\sigma^2(x)}dx\right).
\end{equation*}
\end{lemma}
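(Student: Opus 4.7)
The plan is to reduce the general case $(\sigma,b)$ to the special case $\sigma\equiv 1$ already handled by Lemma \ref{Haldanespecial}, via a Lamperti-type change of variables followed by a space-time rescaling.

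First, I would set $c=\int_0^1 dy/\sigma(y)$ and $\Phi(x)=\int_0^x dy/\sigma(y)$, so that $\Phi$ is a $C^3$ strictly increasing bijection of $\mathbb{R}$ with $\Phi(0)=0$ and $\Phi(x+1)=\Phi(x)+c$. Letting $Y_t=\Phi(X_t)$, It\^o's formula gives
\[
dY_t=\tilde b(Y_t)\,dt+dW_t,\qquad Y_0=0,
\]
where $\tilde b(y)=\frac{b(\Phi^{-1}(y))}{\sigma(\Phi^{-1}(y))}-\tfrac{1}{2}\sigma'(\Phi^{-1}(y))$ is a $C^1$ function of period $c$. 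To get back to period $1$ (so that Lemma \ref{Haldanespecial} applies), I would further set $Z_t=Y_{c^2t}/c$ and $\tilde W_t=W_{c^2t}/c$; by Brownian scaling $\tilde W$ is a standard Brownian motion, and $Z$ solves $dZ_t=\hat b(Z_t)\,dt+d\tilde W_t$ with $Z_0=0$, where $\hat b(z)=c\,\tilde b(cz)$ is a $C^1$ function of period $1$.

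The key observation is that, since $\Phi$ is monotone with $\Phi(\pm 1)=\pm c$, the process $X$ forms its first cycle at time $T$ with $X_T=\pm 1$ if and only if $Z$ forms its first cycle at time $\tilde T=T/c^2$ with $Z_{\tilde T}=\pm 1$. Applying Lemma \ref{Haldanespecial} to $Z$ with threshold $u/c^2$ therefore yields
\[
\frac{P(T\leq u,X_T=1)}{P(T\leq u,X_T=-1)}=\exp\!\left(2\int_0^1\hat b(y)\,dy\right),
\]
and similarly for the unconditional ratio. It remains to evaluate the exponent: changing variables $y=\Phi(x)/c$ (so $dy=dx/(c\sigma(x))$) gives
\[
\int_0^1\hat b(y)\,dy=\int_0^1\frac{b(x)}{\sigma^2(x)}\,dx-\frac{1}{2}\int_0^1\frac{\sigma'(x)}{\sigma(x)}\,dx,
\]
and the second integral vanishes by periodicity since it equals $\tfrac{1}{2}[\log\sigma(1)-\log\sigma(0)]=0$, producing the stated exponent $2\int_0^1 b(x)/\sigma^2(x)\,dx$.

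The main obstacle I anticipate is not conceptual but bookkeeping: one must carefully verify that the space-time rescaling genuinely identifies the cycle-forming events of $X$ with those of $Z$, and that the cancellation of the $\sigma'/\sigma$ term---which is what recovers the natural scale-function exponent rather than an unsightly It\^o correction---relies precisely on the periodicity of $\sigma$. The regularity hypotheses $b\in C^1$ and $\sigma\in C^2$ are exactly what is needed to keep $\tilde b$ (and hence $\hat b$) in $C^1$, so that Lemma \ref{Haldanespecial} is applicable to $Z$.
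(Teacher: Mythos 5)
Your proof is correct and uses essentially the same Lamperti change of variables $\Phi(x)=\int_0^x dy/\sigma(y)$ that the paper uses (there called $f$), reducing to unit diffusion coefficient and invoking Lemma \ref{Haldanespecial}, with the same computation showing the It\^o correction $\sigma'/\sigma$ integrates to zero by periodicity. The one genuine refinement in your write-up is the explicit Brownian-scaling step $Z_t=Y_{c^2t}/c$ that brings the transformed period $c=\Phi(1)$ back to $1$, so that Lemma \ref{Haldanespecial} applies verbatim; the paper instead applies the lemma directly with period $f(1)$ and threshold $\pm f(1)$, implicitly assuming the (obvious) scaled version, so your treatment is a bit more careful on this point.
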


\begin{proof}
Let $f$ be the function on $\mathbb{R}$ defined as
\begin{equation*}
f(x) = \int_0^x\frac{1}{\sigma(y)}dy.
\end{equation*}
It is easy to see that $f$ is a smooth diffeomorphism on $\mathbb{R}$. By It\^{o}'s formula, it is easy to check that $f(X) = \{f(X_t):t\geq 0\}$ is a diffusion process solving the stochastic differential equation
\begin{equation*}
dY_t = c(Y_t)dt + dW_t,\;\;\;Y_0 = 0,
\end{equation*}
where $c = \left(f'b+\frac{1}{2}f''\sigma^2\right)\circ f^{-1}$ is a function satisfying $c(f(\cdot+1)) = c(f(\cdot))$. Since $\sigma$ is a smooth function with period 1, for any $x\in\mathbb{R}$,
\begin{equation*}
f(x+1)-f(x) = \int_x^{x+1}\frac{1}{\sigma(y)}dy = \int_0^1\frac{1}{\sigma(y)}dy = f(1).
\end{equation*}
This shows that $c(\cdot+f(1)) = c(\cdot)$ and thus $c$ is a smooth function with period $f(1)$.

Since $f$ is a smooth diffeomorphism, the cycle forming time $T$ of the diffusion process $X$ is exactly the cycle forming time of the diffusion process $f(X)$:
\begin{equation*}
T = \inf\{t\geq 0: |f(X_t)| = f(1)\}.
\end{equation*}
By Lemma \ref{Haldanespecial}, we have
\begin{equation*}
\frac{P(T\leq u,f(X_T)=f(1))}{P(T\leq u,f(X_T)=-f(1))} = \frac{P(f(X_T)=f(1))}{P(f(X_T)=-f(1))}
= \exp\left(2\int_0^{f(1)}c(y)dy\right).
\end{equation*}
Since $\sigma$ is a smooth function with period 1, it is easy to check that $-f(1) = f(-1)$ and
\begin{equation*}
\int_0^{f(1)}c(y)dy = \int_0^1\left(f'(x)b(x)+\frac{1}{2}f''(x)\sigma^2(x)\right)f'(x)dx
= \int_0^1\frac{b(x)}{\sigma^2(x)}dx.
\end{equation*}
Thus we obtain that
\begin{equation*}
\frac{P(T\leq u,f(X_T)=f(1))}{P(T\leq u,f(X_T)=f(-1))} = \frac{P(f(X_T)=f(1))}{P(f(X_T)=f(-1))}
= \exp\left(2\int_0^1\frac{b(x)}{\sigma^2(x)}dx\right),
\end{equation*}
which gives the desired result.
\end{proof}

\begin{lemma}\label{boundary}
Let $X$ be a diffusion process solving the stochastic differential equation \eqref{SDEinitial}, where $b:\mathbb{R}\rightarrow\mathbb{R}$ and $\sigma:\mathbb{R}\rightarrow(0,\infty)$ are continuous functions with period 1. For an arbitrarily fixed $u>0$, let $A$ and $B$ be two subsets of $\mathbb{W}$ as defined in \eqref{subsets}. Then $\mu_X(\partial{A}) = \mu_X(\partial{B}) = 0$.
\end{lemma}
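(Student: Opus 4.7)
The plan is to bound the topological boundaries $\partial A$ and $\partial B$ in $\mathbb{W}$ (equipped with the topology of uniform convergence on compact intervals) by two sets of vanishing $\mu_X$-measure. Let $E_1 = \{w\in\mathbb{W}:\tau(w)=u\}$ and let $E_2$ be the set of paths that fail to strictly cross $\pm 1$ at the first exit time $\tau(w)$, meaning either $w_\tau = 1$ and $w(t)\leq 1$ on some interval $[\tau,\tau+\epsilon]$, or $w_\tau = -1$ and $w(t)\geq -1$ on some interval $[\tau,\tau+\epsilon]$. The aim is to show $\partial A\cup\partial B\subseteq E_1\cup E_2$ and then $\mu_X(E_1)=\mu_X(E_2)=0$.

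The first and most delicate step is the inclusion $\partial A\cup\partial B\subseteq E_1\cup E_2$. For $w\notin E_2$ I would verify that the functional $w'\mapsto(\tau(w'),w'(\tau(w')))$ is continuous at $w$: strict crossing at $\tau(w)$ is stable under small uniform perturbations, and the compactness estimate $\sup_{t\leq\tau(w)-\delta}|w(t)|<1$ combined with the fact that $w$ lies uniformly close to one specific side (either $+1$ or $-1$, but not both) on $[\tau(w)-\delta,\tau(w)]$ prevents a perturbation from flipping the exit location. Moreover, if $\tau(w)>u$ then by compactness $\sup_{t\leq u}|w(t)|<1$ strictly, so nearby paths also have $\tau>u$. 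A case analysis on whether $\tau(w)$ is less than, equal to, or greater than $u$, together with the value of $w_{\tau(w)}$, then places $w$ in the interior of $A$, the interior of $B$, the interior of $(A\cup B)^c$, or in $E_1$.

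For the measure-zero estimates, $\mu_X(E_1)=0$ is immediate from Lemma \ref{density}: the event $\{\tau=u\}$ is contained in $\{X_u\in\{-1,1\}\}$, and since $X_u$ admits a Lebesgue density, this event is $\mu_X$-null. For $\mu_X(E_2)=0$ I would invoke the non-degeneracy of $X$: since $\sigma>0$ is continuous, every point $y\in\mathbb{R}$ is regular for $X$, so starting from $y$ the process a.s.\ takes values strictly above $y$ (and strictly below $y$) for arbitrarily small positive times. Applied at $\tau$ via the strong Markov property, this rules out any positive probability of touching $\pm 1$ without crossing; regularity itself reduces, by a local scale-function and Girsanov comparison, to the classical immediate-oscillation property of Brownian paths.

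The main obstacle is the topological identification in the first step: $\tau$ is only lower semi-continuous on $\mathbb{W}$ in general, so carving $\partial A$ and $\partial B$ out of $E_1\cup E_2$ requires careful perturbation bookkeeping, especially to exclude the possibility that a small change in $w$ could swap the exit location from $+1$ to $-1$. Once this bookkeeping is in place, the measure-zero estimates for $E_1$ and $E_2$ are routine consequences of the transition density from Lemma \ref{density} and the regularity of boundary points for non-degenerate diffusions.
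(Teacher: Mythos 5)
Your proposal follows essentially the same route as the paper's proof: decompose $\partial A$ (and $\partial B$) into a "$\tau = u$" piece and a "touch but do not strictly cross at $\tau$" piece, kill the first with the transition density from Lemma \ref{density}, and kill the second via Girsanov reduction to the driftless case followed by the time-change representation $M_t = B_{[M,M]_t}$ and the immediate-oscillation property of Brownian paths. The only substantive difference is that you spend more care on the topological bookkeeping --- in particular spelling out why paths with a strict crossing and $\tau\neq u$ are interior points of $A$, $B$, or $(A\cup B)^c$ --- whereas the paper simply asserts the inclusion of $\partial A$ into the three measure-zero events; your added discussion of lower semi-continuity of $\tau$ and stability of the exit location under uniform perturbation is a useful elaboration, but the underlying argument and the key lemmas invoked are the same.
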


\begin{proof}
Let $\bar A$ be the closure of $A$ and let $A^\circ$ be the interior of $A$. It is easy to check that
\begin{equation*}
\begin{split}
& \bar{A} \subset A\cup\{\tau<u,\;w(\tau)=-1,\;\exists h>0,\;\textrm{such that}\;\forall t\in[\tau,\tau+h],\;w(t)\geq -1\}, \\
& A^\circ \supset \{\tau<u,\;w(\tau)=1,\;\forall h>0,\;\exists t\in[\tau,\tau+h],\;\textrm{such that}\;w(t)>1\}.
\end{split}
\end{equation*}
The above two relations show that
\begin{equation}\label{threeparts}
\begin{split}
\partial{A} \subset &\;\{\tau=u,w(\tau)=1\} \\
&\cup\{\tau<u,\;w(\tau)=1,\;\exists h>0,\;\textrm{such that}\;\forall t\in[\tau,\tau+h],\;w(t)\leq 1\} \\
&\cup\{\tau<u,\;w(\tau)=-1,\;\exists h>0,\;\textrm{such that}\;\forall t\in[\tau,\tau+h],\;w(t)\geq -1\}.
\end{split}
\end{equation}
Let $M$ be the diffusion process solving the stochastic differential equation
\begin{equation*}
dM_t = \sigma(M_t)dW_t,\;\;\;M_0 = 0.
\end{equation*}
By Girsanov's theorem, $\mu_X$ and $\mu_M$ are equivalent on the measurable space $(\mathbb{W},\mathscr{F}_u)$. Since $M$ is a strong Markov process, it follows from \eqref{threeparts} that
\begin{equation*}
\begin{split}
\mu_M(\partial{A}) \leq &\;P_0(M_u=1)
+P_1(\exists h>0,\;\textrm{such that}\;\forall t\in[0,h],\;M_t\leq 1\} \\
&+P_{-1}(\exists h>0,\;\textrm{such that}\;\forall t\in[0,h],\;M_t\geq -1\},
\end{split}
\end{equation*}
where $P_x(\cdot) = P(\cdot|M_0=x)$. It is a classical result that for each $u>0$ and $x\in\mathbb{R}$, the transition function $P_u(x,dy)$ of $M$ has a density $p(u,x,y)$ with respect to the Lebesgue measure \cite[Lemma 9.2.2]{stroock2006multidimensional}. This shows that $P_0(M_u=1) = 0$ for each $u>0$. Since $M$ is a continuous martingale, there exists a Brownian motion $B$, such that $M_t = B_{[M,M]_t}$ \cite[Chapter V, Theorem 1.6]{revuz1999continuous}, where $[M,M]_t = \int_0^t\sigma^2(M_s)ds$ is a strictly increasing process. Thus
\begin{equation*}
P_1(\exists h>0,\;\textrm{such that}\;\forall t\in[0,h],\;M_t\leq 1\}
= P_1(\exists h>0,\;\textrm{such that}\;\forall t\in[0,h],\;B_t\leq 1\} = 0.
\end{equation*}
Similarly, we can prove that $P_{-1}(\exists h>0,\;\textrm{such that}\;\forall t\in[0,h],\;M_t\geq -1\} = 0$. This shows that $\mu_X(\partial{A})= \mu_M(\partial{A}) = 0$. Similarly, we can prove that $\mu_X(\partial{B}) = 0$.
\end{proof}

The following result can be found in \cite[Theorem 11.1.4]{stroock2006multidimensional}.
\begin{lemma}\label{approximation}
Let $X$ be a diffusion process solving the stochastic differential equation \eqref{SDEinitial}, where $b:\mathbb{R}\rightarrow\mathbb{R}$ and $\sigma:\mathbb{R}\rightarrow(0,\infty)$ are bounded continuous functions. For each $n\geq 1$, let $X_n$ be a diffusion process solving the stochastic differential equation
\begin{equation*}
dX_t = b_n(X_t)dt + \sigma_n(X_t)dW_t,\;\;\;X_0 = 0,
\end{equation*}
where $b_n:\mathbb{R}\rightarrow\mathbb{R}$ and $\sigma_n:\mathbb{R}\rightarrow(0,\infty)$ are bounded continuous functions. Assume that for any $M>0$, the following two conditions hold:
\begin{equation*}
\begin{split}
& \sup_{n\geq 1}\sup_{|x|\leq M}\left(|\sigma_n^2(x)|+|b_n(x)|\right) < \infty, \\
& \lim_{n\rightarrow\infty}\sup_{|x|\leq M}\left(|\sigma_n^2(x)-\sigma^2(x)|+|b_n(x)-b(x)|\right) = 0.
\end{split}
\end{equation*}
Then $\mu_{X_n}\Rightarrow\mu_X$ as $n\rightarrow\infty$, where $\Rightarrow$ stands for weak convergence.
\end{lemma}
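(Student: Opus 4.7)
My plan is to use the Stroock--Varadhan martingale problem framework, already invoked in the paper for the well-posedness of $X$. Weak convergence $\mu_{X_n}\Rightarrow\mu_X$ will be obtained in three steps: establishing tightness of $\{\mu_{X_n}\}$ on path space, identifying every subsequential limit as a solution of the martingale problem for $(b,\sigma^2)$ with initial condition $\delta_0$, and then appealing to Stroock--Varadhan uniqueness to conclude $\mu^*=\mu_X$ for every subsequential limit $\mu^*$.

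For tightness on $C([0,T],\mathbb{R})$ for each $T>0$, I would localize at the exit times $\tau_M^n:=\inf\{t\geq 0:|X_n(t)|\geq M\}$ and set $K_M:=\sup_n\sup_{|x|\leq M}(|b_n(x)|+\sigma_n^2(x))$, which is finite by hypothesis. Applied to the stopped processes $X_n^M(t):=X_n(t\wedge\tau_M^n)$, the SDE decomposition combined with H\"older and the Burkholder--Davis--Gundy inequality gives $E[|X_n^M(t)-X_n^M(s)|^4]\leq C(K_M,T)|t-s|^2$, so Kolmogorov's criterion yields tightness of $\{\mu_{X_n^M}\}_n$ for each $M$. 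To remove the stopping, I would apply It\^o's formula to the Lyapunov function $V(x)=1+x^2$ and use Gronwall together with the global boundedness of $b,\sigma$ and the local-uniform convergence $b_n\to b$, $\sigma_n\to\sigma$ to derive uniform moment bounds implying $\sup_n P(\tau_M^n\leq T)\to 0$ as $M\to\infty$; Markov's inequality then upgrades the stopped tightness to tightness of $\{\mu_{X_n}\}$ itself.

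For the identification step, suppose $\mu_{X_{n_k}}\Rightarrow\mu^*$ along a subsequence. For each $f\in C_c^2(\mathbb{R})$, set $L_nf:=b_nf'+\tfrac{1}{2}\sigma_n^2 f''$, so that the process $M_t^{f,n}(w):=f(w_t)-f(w_0)-\int_0^t(L_nf)(w_r)\,dr$ is a $\mu_{X_n}$-martingale. For every $0\leq s<t$ and every bounded continuous $\mathscr{F}_s$-measurable functional $\Phi$ on $\mathbb{W}$, we have $\int_{\mathbb{W}}(M_t^{f,n}-M_s^{f,n})\Phi\,d\mu_{X_n}=0$. Since $f$ is compactly supported and $b_n\to b$, $\sigma_n^2\to\sigma^2$ uniformly on compacts, the integrand is continuous on $\mathbb{W}$, uniformly bounded in $n$ and $w$, and converges pointwise to $(M_t^f-M_s^f)\Phi$ with $Lf=bf'+\tfrac{1}{2}\sigma^2 f''$. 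The Portmanteau theorem then passes the identity to the limit, so $\mu^*$ solves the martingale problem for $(b,\sigma^2)$ started at $0$. By the Stroock--Varadhan uniqueness cited in the paper, this solution is unique and equals $\mu_X$, so $\mu^*=\mu_X$ and the full sequence converges.

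The main technical obstacle is the uniform-in-$n$ non-explosion estimate needed to turn tightness of the stopped family into tightness of $\{\mu_{X_n}\}$. Because the hypothesis supplies only local (not global) uniform bounds on the coefficients, one cannot close a Gronwall argument with a single global constant. The resolution is to exploit that the limiting $b$ and $\sigma$ are themselves globally bounded: local-uniform convergence $b_n\to b$, $\sigma_n\to\sigma$ then forces $\sup_{|x|\leq M}(|b_n|+\sigma_n^2)$ to be controlled uniformly in $n$ beyond an $M$-dependent threshold, while the finitely many remaining $X_n$ are individually tight; the Lyapunov function $V(x)=1+x^2$ yields the required uniform control. Everything else---continuity of the test functionals on $\mathbb{W}$, the Portmanteau passage, and the uniqueness step---is routine once tightness is secured.
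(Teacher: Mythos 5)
Your proposal reconstructs exactly the martingale-problem argument that the paper references in Stroock and Varadhan: tightness via stopping and moment estimates, identification of subsequential limits by passing to the limit in the martingale characterization with compactly supported test functions, and uniqueness for the limiting $(b,\sigma^2)$ problem. The resolution of the uniform non-explosion step, applying the Lyapunov estimate to the stopped processes for $n$ beyond the $M$-dependent threshold where the local bound is controlled by $\|b\|_\infty$ and $\|\sigma^2\|_\infty$, and handling the finitely many remaining indices individually, is the correct way to close the argument.
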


We are now in a position to prove the cycle symmetry when $\sigma$ and $b$ are both continuous.
\begin{proof}[Proof of Theorem \ref{general}]
We only need to prove the result when $X$ starts from a given point $x\in {\mathbb R}$. Without loss of generality, we assume that $x = 0$. It is easy to see that (ii) is a direct corollary of (i). Thus we only need to prove (i). Since the subset of smooth periodic functions are dense in the space of continuous periodic functions, we can find a sequence of smooth functions $b_n$ with period 1 such that $b_n$ converges to $b$ uniformly. Similarly, we can find a sequence of smooth functions $\sigma_n>0$ with period 1 such that $\sigma_n$ converges to $\sigma$ uniformly. Let $X_n$ be the diffusion process solving the stochastic differential equation
\begin{equation*}
dX_t = b_n(X_t)dt+\sigma_n(X_t)dW_t,\;\;\;X_0 = 0.
\end{equation*}
For any $u>0$, let $A$ and $B$ be two subsets of $\mathbb{W}$ as defined in \eqref{subsets}. By Lemma \ref{smooth}, we have
\begin{equation}\label{before}
\frac{\mu_{X_n}(A)}{\mu_{X_n}(B)} = \exp\left(2\int_0^1\frac{b_n(x)}{\sigma_n^2(x)}dx\right).
\end{equation}
Since $b,\sigma,b_n,\sigma_n$ are all continuous periodic functions, it is easy to check that the conditions in Lemma \ref{approximation} are satisfied. Thus $\mu_{X_n}\Rightarrow\mu_X$. Moreover, it follows from Lemma \ref{boundary} that $\mu_X(\partial{A}) = \mu_X(\partial{B}) = 0$. This shows that $\mu_{X_n}(A)\rightarrow\mu_X(A)$ and $\mu_{X_n}(B)\rightarrow\mu_X(B)$. Then we can obtain the desired result by letting $n\rightarrow\infty$ in \eqref{before}.
\end{proof}

\section{Renewal random walk representation for diffusion processes on the circle}
In this section, we shall use the cycle symmetry to prove that the cycle dynamics of any diffusion process on the circle with any initial distribution is nothing but a renewal random walk. Here a renewal random walk stands for a nearest-neighbor random walk whose interarrival times are i.i.d. positive random variables.

\begin{proposition}\label{independent2}
$T$ and $X_T-X_0$ are independent.
\end{proposition}

\begin{proof}
It is easy to see that $T^+<T^-$ is equivalent to $X_T-X_0=1$ and $T^-<T^+$ is equivalent to $X_T-X_0=-1$. By Theorem \ref{general}, we obtain that
\begin{equation*}
P(T\leq u|X_T-X_0=1) = P(T\leq u|X_T-X_0=-1).
\end{equation*}
This implies that $T$ and $X_T-X_0$ are independent.
\end{proof}

\begin{remark}\label{independent1}
Intuitively, $T$ is the time needed for $X$ to form a forward or backward cycle for the first time and $X_T-X_0$ characterizes which one of these two cycles is formed. Thus the above corollary shows that the forming time of a forward or backward cycle for a diffusion process on the circle is independent of which one of these two cycles is formed.
\end{remark}

The following result is interesting in its own right.
\begin{theorem}\label{initial}
The distributions of $T$ and $X_T-X_0$ are independent of the initial distribution of $X$.
\end{theorem}

\begin{proof}
Let $\mu$ be the initial distribution of $X$. It follows from Theorem \ref{general} that
\begin{equation*}
\frac{P_\mu(X_T-X_0=1)}{P_\mu(X_T-X_0=-1)} = \exp\left(2\int_0^1\frac{b(y)}{\sigma^2(y)}dy\right).
\end{equation*}
This shows that the distribution of $X_T-X_0$ is independent of the initial distribution of $X$.

We shall now view $X$ as a diffusion process on $S^1$. Without loss of generality, we assume that $X$ is the coordinate process on $W(S^1)$. Let $T_2$ be the second cycle forming time of $X$ and let $S = T_2-T$. By the strong Markov property of $X$, for any $t\geq 0$,
\begin{equation}\label{strong}
P_\mu(S\leq t|\mathscr{G}_T) = P(T\circ\theta_T\leq t|\mathscr{G}_T) = P_{X_T}(T\leq t) = P_\mu(T\leq t),
\end{equation}
where $\theta_T$ be the shift operator on $W(S^1)$ and the last equality holds because $X_T$ and $X_0$ represent the same point on $S^1$. This shows that $T$ and $S$ are independent and have the same distribution. Since \eqref{strong} holds for any initial distribution $\mu$, we have for any $x\in S^1$,
\begin{equation*}
P_\mu(S\leq t,T\leq t) = P_\mu(T\leq t)^2,\;\;\;P_x(S\leq t,T\leq t) = P_x(T\leq t)^2.
\end{equation*}
The above two equations suggest that
\begin{equation*}
\left[\int_{S_1}P_x(T\leq t)\mu(dx)\right]^2 = \int_{S^1}P_x(T\leq t)^2\mu(dx).
\end{equation*}
Since the above equation holds for any distribution $\mu$, the function $g(x) = P_x(T\leq t)$ must be a constant. This shows that the distribution of $T$ is independent of the initial distribution of $X$.
\end{proof}

The above results suggest that the cycle dynamics of any diffusion process on the circle with any initial distribution is simply a renewal random walk: we only need to wait a random time for the process to form a cycle, toss a coin to decide whether the forward or the backward cycle is formed, and repeat the above procedures independently. This fact is stated rigorously in the following theorem.
\begin{theorem}\label{renewal}
For any $t\geq 0$ and $n\geq 1$, let $N_t = \inf\{n\geq 0: T_{n+1}>t\}$ and let $\xi_n = X_{T_n}-X_{T_{n-1}}$, where $T_0 = 0$. Let $L = \{L_t:t\geq 0\}$ be a process defined as $L_t = \sum_{n=1}^{N_t}\xi_n$. Then $L$ is a renewal random walk.
\end{theorem}

\begin{proof}
For any $n\geq 0$, let $\tau_n = T_{n+1}-T_n$. By the strong Markov property of $X$ and Proposition \ref{independent2}, for any $x\in\mathbb{R}$, $t_0,\cdots,t_m\geq 0$, and $s_1,\cdots,s_m\in\{1,-1\}$,
\begin{equation*}
P_x(\tau_0\leq t_0,\cdots,\tau_m\leq t_m,\xi_1=s_1,\cdots,\xi_m=s_m)
= \prod_{i=0}^mP_x(T\leq t_i)\prod_{i=1}^mP_x(X_T-X_0=s_i).
\end{equation*}
By Theorem \ref{initial}, it is easy to see that
\begin{equation*}
P(\tau_0\leq t_0,\cdots,\tau_m\leq t_m,\xi_1=s_1,\cdots,\xi_m=s_m)
= \prod_{i=0}^mP(T\leq t_i)\prod_{i=1}^mP(X_T-X_0=s_i).
\end{equation*}
This shows that $\{\tau_n:n\geq 0\}$ are i.i.d. positive random variables and $\{\xi_n:n\geq 1\}$ are i.i.d. Bernoulli random variables independent of $\{\tau_n:n\geq 0\}$. Thus $L$ is a renewal random walk.
\end{proof}

\section{Limit theorems and large deviations for sample circulations}
In this section, we shall use the renewal random walk representation for diffusion processes on the circle to study the limit theorems and large deviations for the sample circulations.

\subsection{Some results on Markov renewal processes}
To make the paper more self-contained, we recall the following definition.
\begin{definition}
Let $\{\xi_n: n\geq 0\}$ be an irreducible discrete-time Markov chain with finite state space $E$. Assume that each $i\in E$ is associated with a Borel probability measure $\phi_i$ on $(0,\infty)$. Let $\{\tau_n: n\geq 0\}$ be a sequence of positive random variables such that given $\{\xi_n:n\geq 0\}$, the random variables $\{\tau_n:n\geq 0\}$ are independent and have the distribution
\begin{equation*}
P(\tau_n\in\cdot|\xi_1,\cdots,\xi_n) = \phi_{\xi_n}(\cdot).
\end{equation*}
Then $\{(\xi_n,\tau_n): n\geq 0\}$ is called a Markov renewal process.
\end{definition}

\begin{remark}
It is easy to see that a renewal random walk can be represented as a Markov renewal process, where $\{\tau_n:n\geq 0\}$ are i.i.d positive random variables and $\{\xi_n:n\geq 1\}$ are i.i.d. Bernoulli random variables independent of $\{\tau_n:n\geq 0\}$.
\end{remark}

In fact, the limit theorems and large deviations for Markov renewal processes are well-established. Before we state these results, we introduce several notations. Let $\{(\xi_n,\tau_n): n\geq 0\}$ be a Markov renewal process, where $\xi = \{\xi_n: n\geq 0\}$ is an irreducible discrete-time Markov chain on a finite state space $E = \{1,\cdots,N\}$ with transition probability matrix $P = (p_{ij})$ and invariant distribution $\pi = (\pi_1,\cdots,\pi_N)$. For any $n\geq 1$ and $t\geq 0$, let $T_n = \sum_{k=0}^{n-1}\tau_k$ be the $n$th jump time of the Markov renewal process and let $N_t = \inf\{n\geq 0:T_{n+1}>t\}$ be the number of jumps up to time $t$.

Let $r = (r_{ij})$ be a given matrix and define the reward process $R = \{R_t:t\geq 0\}$ by
\begin{equation}\label{flow}
R_t = \sum_{n=1}^{N_t}r_{\xi_{n-1}\xi_n}.
\end{equation}
Let $g = (g_1,\cdots,g_N)'$ be a column vector defined as
\begin{equation*}
g_i = \sum_{j\in E}p_{ij}r_{ij},
\end{equation*}
and let $f$ be a solution to the following Poisson equation:
\begin{equation*}
(I-P)f = g - \pi g1,
\end{equation*}
where $1$ is a column vector whose components are all 1. Since $P$ is irreducible, it is easy to check that the rank of $I-P$ is $N-1$. Thus the solution to the Poisson equation is unique up to an additive constant. For any $i,j\in E$, set
\begin{equation*}
m_i = \int_{[0,\infty)}t\phi_i(dt),\;\;\;
v_i = \int_{[0,\infty)}t^2\phi_i(dt),\;\;\;
H_{ij} = r_{ij}+f_j-f_i.
\end{equation*}
Moreover, set
\begin{equation*}
m = \sum_{i\in E}\pi_im_i,\;\;\;u = m^{-1}\sum_{i,j\in E}\pi_ip_{ij}r_{ij},\;\;\;
\sigma^2 = m^{-1}\sum_{i,j\in E}\pi_ip_{ij}(H_{ij}^2-2um_iH_{ij}+u^2v_i).
\end{equation*}

The following lemma, which gives the strong law of large numbers and functional central limit theorem for Markov renewal processes, is due to Glynn and Hass \cite{glynn2004functional}.
\begin{lemma}\label{FCLT}
Assume that $\sum_{i\in E}m_i<\infty$. Then
\begin{equation*}
\lim_{t\rightarrow\infty}\frac{R_t}{t} = u,\;\;\;\textrm{a.s.}
\end{equation*}
For any $\eta\geq 0$, let $U^\eta = \{U^\eta_t:t\geq 0\}$ be the process defined as
\begin{equation*}
U^\eta_t = \eta^{-1/2}(R_{\eta t}-u\eta t).
\end{equation*}
If $v_i<\infty$ for any $i\in E$, then $U^\eta\Rightarrow\sigma W$ on $D[0,\infty)$ as $\eta\rightarrow\infty$, where $D[0,\infty)$ is the Skorokhod space and $W$ is a standard Brownian motion.
\end{lemma}

The following lemma, which gives the large deviations for Markov renewal processes, is due to Mariani and Zambotti \cite{mariani2012large}.
\begin{lemma}\label{LDP}
For any $t\geq 0$, let $Q_t\in C(E\times E,[0,\infty))$ be the empirical flow defined as
\begin{equation}\label{flow}
Q_t(i,j) = \frac{1}{t}\sum_{n=1}^{N_t}1_{\{\xi_{n-1}=i,\xi_n=j\}}.
\end{equation}
Then the law of $Q_t$ satisfies a large deviation principle with rate $t$ and good rate function $I: C(E\times E,[0,\infty))\rightarrow[0,\infty]$. Moreover, the rate function $I$ is convex.
\end{lemma}

The explicit expression of the rate function $I$ is rather complicated. Readers who are interested in this expression may refer to Equations (5)-(7) in \cite{mariani2012large}.

\subsection{Strong law of large numbers for sample circulations}
Let $X$ be a diffusion process solving the stochastic differential equation \eqref{SDE}, where $b:\mathbb{R}\rightarrow\mathbb{R}$ and $\sigma:\mathbb{R}\rightarrow(0,\infty)$ are continuous functions with period 1. For further references, we introduce some notations. For any $t\geq 0$, let $N_t = \inf\{n\geq 0: T_{n+1}>t\}$ be the number of cycles formed by $X$ up to time $t$ and let
\begin{equation*}
N^+_t = \sum_{n=1}^{N_t}1_{\{X_{T_n}-X_{T_{n-1}}=1\}},\;\;\;
N^-_t = \sum_{n=1}^{N_t}1_{\{X_{T_n}-X_{T_{n-1}}=-1\}}
\end{equation*}
be the numbers of the forward and backward cycles formed by $X$ up to time $t$, respectively.

\begin{definition}\label{empirical}
The sample circulations $J^+_t$ and $J^-_t$ along the forward and backward cycles up to time $t$ are defined as
\begin{equation}\label{limits}
J^+_t = \frac{1}{t}N^+_t,\;\;\;J^-_t = \frac{1}{t}N^-_t,
\end{equation}
respectively. The sample net circulation $J_t$ of $X$ up to time $t$ is defined as $J_t = J^+_t-J^-_t$.
\end{definition}

The following theorem gives the strong law of large numbers for the sample circulations. Recall that if we regard $X$ as a diffusion process on $S^1$, then $X$ is always ergodic with respect to its unique invariant distribution \cite{guang1982invariant}.
\begin{theorem}
Let $\rho$ be the invariant distribution of $X$. Then for any initial distribution of $X$,
\begin{equation}\label{circulations}
\lim_{t\rightarrow\infty}J^+_t = J^+,\;\;\;
\lim_{t\rightarrow\infty}J^-_t = J^-,\;\;\;\textrm{a.s.},
\end{equation}
where $J^+$ and $J^-$ are two positive constants satisfying
\begin{equation*}
\frac{J^+}{J^-} = \exp\left(2\int_0^1\frac{b(x)}{\sigma^2(x)}dx\right),\;\;\;J^+-J^- = \int_{S^1}b(x)\rho(dx),
\end{equation*}
with $b$ viewed as a function on $S^1$ in the second equation.
\end{theorem}

\begin{proof}
By Theorem \ref{renewal}, the cycle dynamics of $X$ is a renewal random walk and thus $N_t$ is a renewal process. By the strong law of large numbers and elementary renewal theorem, we obtain that
\begin{equation}\label{elementary}
J^+ = \lim_{t\rightarrow\infty}\frac{1}{t}N^+_t = \lim_{t\rightarrow\infty}\frac{N_t}{t}\frac{1}{N_t}\sum_{n=1}^{N_t}1_{\{X_{T_n}-X_{T_{n-1}}=1\}} = \frac{P(X_T-X_0=1)}{ET},\;\;\;\textrm{a.s.}
\end{equation}
By Theorem \ref{general}, we have
\begin{equation*}
\frac{J^+}{J^-} = \frac{P(X_T-X_0=1)}{P(X_T-X_0=-1)} = \frac{P(T^+<T^-)}{P(T^-<T^+)} = \exp\left(2\int_0^1\frac{b(x)}{\sigma^2(x)}dx\right).
\end{equation*}
Since $X$ is the solution to the stochastic differential equation \eqref{SDE}, we have
\begin{equation}\label{inter1}
\frac{X_t}{t} = \frac{X_0}{t} + \frac{1}{t}\int_0^tb(X_s)ds + \frac{1}{t}\int_0^t\sigma(X_s)dW_s.
\end{equation}
It follows from Birkhoff's ergodic theorem that
\begin{equation}\label{Birkhoff1}
\lim_{t\rightarrow\infty}\frac{1}{t}\int_0^tb(X_s)ds = \int_{S^1}b(x)\rho(dx),\;\;\;\textrm{a.s.}
\end{equation}
Let $M_t = \int_0^t\sigma(X_s)dW_s$. Since $M$ is a continuous martingale, there exists a Brownian motion $B$ such that $M_t = B_{[M,M]_t}$. By Birkhoff's ergodic theorem again, we obtain that
\begin{equation}\label{Birkhoff2}
\lim_{t\rightarrow\infty}\frac{[M,M]_t}{t} = \lim_{t\rightarrow\infty}\frac{1}{t}\int_0^t\sigma^2(X_s)ds = \int_{S^1}\sigma^2(x)\rho(dx) > 0,\;\;\;\textrm{a.s.}
\end{equation}
By Khinchin's law of the iterated logarithm, it is easy to check that
\begin{equation*}
\lim_{t\rightarrow\infty}\frac{B_t}{t} = 0,\;\;\;\textrm{a.s.}
\end{equation*}
This fact, together with \eqref{Birkhoff1} and \eqref{Birkhoff2}, shows that
\begin{equation}\label{inter2}
\lim_{t\rightarrow\infty}\frac{1}{t}\int_0^t\sigma(X_s)dW_s = \lim_{t\rightarrow\infty}\frac{B_{[M,M]_t}}{t} = \lim_{t\rightarrow\infty}\frac{B_{[M,M]_t}}{[M,M]_t}\cdot\frac{[M,M]_t}{t} = 0,\;\;\;\textrm{a.s.}
\end{equation}
In addition, it is easy to see that $N^+_t-N^-_t = X_{T_{N_t}}-X_0$ and $|X_{T_{N_t}}-X_t|\leq 1$. This implies that
\begin{equation}
\left|J_t-\frac{X_t}{t}\right|\leq\frac{1}{t}.
\end{equation}
Combining \eqref{inter1}, \eqref{Birkhoff1}, and \eqref{inter2}, we finally obtain that
\begin{equation}
J = \lim_{t\rightarrow\infty}J_t = \lim_{t\rightarrow\infty}\frac{X_t}{t} = \int_{S^1}b(x)\rho(dx),\;\;\;\textrm{a.s.}.
\end{equation}
This completes the proof of this theorem.
\end{proof}

\begin{definition}
The limits $J^+$ and $J^-$ in \eqref{circulations} are called the circulations of $X$ along the forward and backward cycles, respectively. The net circulation $J$ of $X$ is defined as $J = J^+-J^-$.
\end{definition}

Intuitively, $J^+$ and $J^-$ represent the numbers of the forward and backward cycles formed by $X$ per unit time, respectively, and $J$ represents the net number of cycles formed by $X$ per unit time. Due to this reason, the net circulation $J$ is also called the rotation number \cite{guang1982invariant}.

The following definition originates from nonequilibrium statistical physics.
\begin{definition}\label{affinity}
The affinity $\gamma$ of $X$ is defined as
\begin{equation*}
\gamma = \log\frac{J^+}{J^-} = 2\int_0^1\frac{b(y)}{\sigma^2(y)}dy.
\end{equation*}
\end{definition}

\subsection{Functional central limit theorem for sample circulations}
\begin{lemma}\label{moments}
For any $p>0$,
\begin{equation*}
ET^p<\infty.
\end{equation*}
\end{lemma}

\begin{proof}
Without loss of generality, we assume that $X$ starts from 0. It is easy to see that the scale function $s$ of $X$ is an strictly increasing $C^2$ function with $s(0) = 0$. By It\^{o}'s formula, it is easy to check that $M = s(X)$ is a diffusion process solving the stochastic differential equation
\begin{equation*}
dM_t = f(M_t)dW_t,\;\;\;Y_0 = 0.
\end{equation*}
where $f = (s'\sigma)\circ s^{-1}$. Let $\sigma_z$ be the hitting time of $z$ by $s(X)$. Then $T = \sigma_{s(-1)}\wedge\sigma_{s(1)}$. Since the distribution of $T$ only depends on the value of $f$ on $[s(-1),s(1)]$, we can assume that $f\geq a$ for some constant $a>0$. Since $M$ is a continuous local martingale, there exists a Brownian motion $B$ such that $M_t = B_{[M,M]_t}$, where
\begin{equation*}
[M,M]_t = \int_0^tf^2(M_s)ds \geq a^2t.
\end{equation*}
Let $\tau_z$ be the hitting time of $z$ by $B$. It is easy to check that $T\leq\tau_{s(-1)}\wedge\tau_{s(1)}/a^2$. This shows that
\begin{equation*}
ET^p \leq \frac{1}{a^{2p}}E[\tau_{s(-1)}\wedge\tau_{s(1)}]^p<\infty.
\end{equation*}
This completes the proof of this lemma.
\end{proof}

The functional central limit theorem for the sample circulations is stated in the next theorem.
\begin{theorem}\label{FCLTcirculation}
For any $\eta\geq 0$, let $J^{\eta,+} = \{J^{\eta,+}_t:t\geq 0\}$, $J^{\eta,-} = \{J^{\eta,+}_t:t\geq 0\}$, $J^\eta = \{J^\eta_t:t\geq 0\}$ be three processes defined as
\begin{equation*}
J^{\eta,+}_t = \eta^{-1/2}(N^+_{\eta t}-J^+\eta t),\;\;\;
J^{\eta,-}_t = \eta^{-1/2}(N^-_{\eta t}-J^-\eta t),\;\;\;
J^\eta_t = J^{\eta,+}_t-J^{\eta,-}_t,
\end{equation*}
respectively. Then $(J^{\eta,+},J^{\eta,-},J^\eta)'\Rightarrow\Sigma W$ on $D([0,\infty),\mathbb{R}^3)$ as $\eta\rightarrow\infty$, where $D([0,\infty),\mathbb{R}^3)$ is the Skorokhod space, $W$ is a standard three-dimensional Brownian motion, and
\begin{equation*}
\Sigma\Sigma' = m^{-1}
\begin{pmatrix}
\rho_++\alpha\rho_+^2 & 2\alpha\rho_-\rho_+ & 2\rho_++2\alpha\rho_+(\rho_+-\rho_-) \\
2\alpha\rho_-\rho_+ & \rho_-+\alpha\rho_-^2 & -2\rho_-+2\alpha\rho_-(\rho_+-\rho_-) \\
2\rho_++2\alpha\rho_+(\rho_+-\rho_-) & -2\rho_-+2\alpha\rho_-(\rho_+-\rho_-) & 1+\alpha(\rho_+-\rho_-)^2
\end{pmatrix}
\end{equation*}
with $m = ET$, $\alpha = ET^2/(ET)^2-2$, $\rho_+ = P(T^+<T^-)$, and $\rho_- = P(T^-<T^+)$.
\end{theorem}

\begin{proof}
Let $K = (J^{\eta,+},J^{\eta,-},J^\eta)'$. By \cite[Chapter 29, Theorem 29.16]{davidson1994stochastic}, the desired result holds if and only if $\lambda'K\Rightarrow\lambda'\Sigma W$ for any row vector $\lambda = (a,b,c)\in\mathbb{R}^3$. Note that
\begin{equation*}
\lambda'K_t = aJ^{\eta,+}_t+bJ^{\eta,-}_t+cJ^\eta_t = \eta^{-1/2}(R_{\eta t}-u\eta t),
\end{equation*}
where $u = aJ^++bJ^-+cJ$ and
\begin{equation*}
R_t = aN^+_t+bN^-_t+c(N^+_t-N^-_t) = \sum_{n=1}^{N_t}(a+c)1_{\{\xi_n=1\}}+(b-c)1_{\{\xi_n=-1\}}
\end{equation*}
is a reward process with $r_{-1,-1} = r_{1,-1} = b-c$ and $r_{-1,1} = r_{1,1} = a+c$. By Lemma \ref{moments}, it is easy to see that $m_{-1} = m_1 = ET<\infty$ and $v_{-1} = v_1 = ET^2<\infty$. By Lemma \ref{FCLT}, it is straightforward to check that $\lambda'K\Rightarrow\lambda'\Sigma W$.
\end{proof}

\subsection{Large deviations for sample circulations}
The large deviation principle for the sample circulations is stated in the following theorem. For the precise definition of the large deviation principle for a family of probability measures, please refer to \cite[Page 3]{varadhan1984large}.
\begin{theorem}\label{LDPcirculation}
The law of $(J^+_t,J^-_t,J_t)$ satisfies a large deviation principle with rate $t$ and good rate function $I_0:\mathbb{R}^3\rightarrow[0,\infty]$. Moreover, $I_0$ is convex and $I_0(J^+,J^-,J) = 0$.
\end{theorem}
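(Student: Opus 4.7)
The plan is to apply the contraction principle to the empirical-flow LDP of Lemma \ref{LDPMRP} via the Markov renewal structure of the cycle dynamics of $X$ established in Lemma \ref{MRPcycle}.

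First, by Lemma \ref{MRPcycle}, the sequence $\{\xi_n,\tau_{n+1}: n\geq 0\}$ is a Markov renewal process on the finite state space $E=\{1,-1\}$, so $C(E\times E,[0,\infty))$ is canonically identified with $[0,\infty)^4$ equipped with the Euclidean topology. Lemma \ref{LDPMRP} then gives that the empirical flow
\begin{equation*}
Q_t(x,y)=\frac{1}{t}\sum_{n=0}^{N_t-1}I_{\{\xi_n=x,\xi_{n+1}=y\}}
\end{equation*}
satisfies a large deviation principle on $[0,\infty)^4$ with rate $t$ and convex good rate function $I_0$.

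Next, I would express $(J^+_t,J^-_t)$ as a continuous image of $Q_t$. The key observation is the shift identity
\begin{equation*}
N^+_t=\sum_{n=1}^{N_t}I_{\{\xi_n=1\}}=\sum_{n=0}^{N_t-1}I_{\{\xi_{n+1}=1\}},
\end{equation*}
together with the analogous one for $N^-_t$, which yield exactly
\begin{equation*}
J^+_t=\sum_{x\in E}Q_t(x,1),\qquad J^-_t=\sum_{x\in E}Q_t(x,-1).
\end{equation*}
Thus, with the linear (hence continuous) map $F:[0,\infty)^4\to\mathbb{R}^2$ defined by $F(q)=\left(\sum_{x}q(x,1),\,\sum_{x}q(x,-1)\right)$, I have $(J^+_t,J^-_t)=F(Q_t)$ pathwise.

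Applying the contraction principle yields the large deviation principle for $(J^+_t,J^-_t)$ on $\mathbb{R}^2$ with rate $t$ and good rate function
\begin{equation*}
I_1(x_1,x_2)=\inf\{I_0(q):q\in[0,\infty)^4,\ F(q)=(x_1,x_2)\},
\end{equation*}
extended by $+\infty$ off $[0,\infty)^2$. Convexity of $I_1$ then follows from convexity of $I_0$ together with linearity of $F$ by a standard argument: for approximate minimizers $q_1,q_2$ with $F(q_i)=(x^{(i)}_1,x^{(i)}_2)$, the convex combination $\lambda q_1+(1-\lambda)q_2$ has $F$-image $\lambda F(q_1)+(1-\lambda)F(q_2)$ and $I_0$-value at most $\lambda I_0(q_1)+(1-\lambda)I_0(q_2)$. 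The main technical ingredients (Brownian excursion theory, the cycle symmetry, and the Markov renewal LDP of \cite{mariani2012large}) are already in place, so I do not expect any substantive obstacle; the only step requiring care is the bookkeeping that identifies $J^{\pm}_t$ exactly (and not merely up to a $O(1/t)$ error) as row marginals of $Q_t$, for which the index shift above is essential.
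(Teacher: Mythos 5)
Your proposal is correct and follows exactly the paper's route: Lemma \ref{MRPcycle} to get the Markov renewal structure, Lemma \ref{LDPMRP} for the empirical-flow LDP, the same linear map $F$ (with the same index-shift identity $\sum_{n=1}^{N_t}I_{\{\xi_n=1\}}=\sum_{n=0}^{N_t-1}I_{\{\xi_{n+1}=1\}}$, which the paper uses implicitly), the contraction principle, and the standard convexity argument via linearity of $F$ and convexity of $I_0$. There is no gap and no substantive difference.
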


\begin{proof}
By Theorem \ref{renewal}, $L$ is a renewal random walk and thus can be viewed as a Markov renewal process. Let $\{Q_t:t\geq 0\}$ be the empirical flow of $L$ defined in \eqref{flow}. Then
\begin{equation*}
J^+_t = \frac{1}{t}\sum_{n=1}^{N_t}1_{\{\xi_n=1\}} = \sum_{i\in E}Q_t(i,1),\;\;\;
J^-_t = \frac{1}{t}\sum_{n=1}^{N_t}1_{\{\xi_n=-1\}} = \sum_{i\in E}Q_t(i,-1).
\end{equation*}
We define a continuous map $F:C(E\times E,[0,\infty))\rightarrow\mathbb{R}^3$ as
\begin{equation}\label{contraction}
F(Q) = \left(\sum_{i\in E}Q(i,1),\sum_{i\in E}Q(i,-1),\sum_{i\in E}(Q(i,1)-Q(i,-1))\right).
\end{equation}
Then we have $(J^+_t,J^-_t,J_t) = F(Q_t)$. By Lemma \ref{LDP}, the law of $Q_t$ satisfies a large deviation principle with rate $t$ and good rate function $I$. By the contraction principle, the law of $(J^+_t,J^-_t,J_t)$ satisfies a large deviation principle with rate $t$ and good rate function $I_0:\mathbb{R}^3\rightarrow[0,\infty]$ defined as
\begin{equation}\label{ratefunction}
I_0(x) = \inf_{Q\in F^{-1}(x)}I(Q).
\end{equation}

We next prove that $I_0$ is convex. We make a crucial observation that the map $F$ is linear. Moreover, it follows from Lemma \ref{LDP} that $I$ is convex. These two facts suggest that for any $\lambda_1,\lambda_2,\lambda_3\geq 0$ satisfying $\lambda_1+\lambda_2+\lambda_3 = 1$ and $x_1,x_2,x_3\in\mathbb{R}^3$,
\begin{equation*}
\begin{split}
& I_0(\lambda_1x_1+\lambda_2x_2+\lambda_3x_3) \leq \inf_{Q\in\lambda_1F^{-1}(x_1)+\lambda_2F^{-1}(x_2)+\lambda_3F^{-1}(x_3)}I(Q) \\
&= \inf_{Q_1\in F^{-1}(x_1),Q_2\in F^{-1}(x_2),Q_3\in F^{-1}(x_3)}I(\lambda_1Q_1+\lambda_2Q_2+\lambda_3Q_3) \\
&= \lambda_1\inf_{Q\in F^{-1}(x_1)}I(Q)+\lambda_2\inf_{Q\in F^{-1}(x_2)}I(Q)+\lambda_3\inf_{Q\in F^{-1}(x_3)}I(Q) \\
&= \lambda_1I_0(x_1)+\lambda_2I_0(x_2)+\lambda_3I_0(x_3),
\end{split}
\end{equation*}
which shows that $I_0$ is also convex.

We finally prove that $I_0(J^+,J^-,J) = 0$. Let $B_\epsilon = \{y\in\mathbb{R}^3: |y-(J^+,J^-,J)|\leq\epsilon\}$. Since $I_0$ is lower semi-continuous and $(J^+_t,J^-_t,J_t)\rightarrow(J^+,J^-,J)$, a.s., we obtain that
\begin{equation*}
\begin{split}
I_0(J^+,J^-,J) \leq \lim_{\epsilon\rightarrow 0}\inf_{y\in B_\epsilon}I_0(y)
\leq -\limsup_{\epsilon\rightarrow 0}\limsup_{t\rightarrow\infty}\frac{1}{t}\log P((J^+_t,J^-_t,J_t)\in B_\epsilon) = 0.
\end{split}
\end{equation*}
This clearly shows that $I_0(J^+,J^-,J)=0$.
\end{proof}

Since the cycle dynamics of $X$ is simply a renewal random walk, where the interarrival times and jump probability are both independent of the initial distribution of $X$. This suggests that the rate function $I_0$ is also independent of the initial distribution of $X$.

\section{Fluctuation theorems for diffusion processes on the circle}
In this section, we shall use the cycle symmetry to prove various types of FTs for diffusion processes on the circle.

\subsection{Fluctuation theorems for sample circulations}
The next proposition characterizes the symmetry of the distribution of the sample circulations. Results of the following type are called transient FTs in nonequilibrium statistical physics.
\begin{proposition}\label{distribution}
Let $\mathbb{N} = \{0,1,2,\cdots\}$. For each $t\geq 0$ and any $m,n\in\mathbb{N}$,
\begin{equation*}
\frac{P(N^+_t=n,N^-_t=m)}{P(N^+_t=m,N^-_t=n)} = e^{\gamma(n-m)},
\end{equation*}
where $\gamma$ is the affinity of $X$.
\end{proposition}

\begin{proof}
With the notations in Theorem \ref{renewal}, we have
\begin{equation*}
P(N^+_t=n,N^-_t=m) = \sum_{s_1,\cdots,s_{n+m}}P(T_{n+m}\leq t<T_{n+m+1},\xi_1=s_1,\cdots,\xi_{n+m}=s_{n+m}),
\end{equation*}
where $s_1,\cdots,s_{n+m}$ ranges over all choices such that $n$ of them equal $1$ and $m$ of them equal $-1$. By Theorem \ref{renewal}, it is easy to see that
\begin{equation*}
P(T_{n+m}\leq t<T_{n+m+1},\xi_1=s_1,\cdots,\xi_{n+m}=s_{n+m}) = P(T_{n+m}\leq t<T_{n+m+1})\rho_+^n\rho_-^m,
\end{equation*}
where $\rho_+ = P(T^+<T^-)$ and $\rho_- = P(T^-<T^+)$. By Theorem \ref{general}, we obtain that
\begin{equation*}
\begin{split}
& P(N^+_t=n,N^-_t=m) = \sum_{s_1,\cdots,s_{n+m}}P(T_{n+m}\leq t<T_{n+m+1})\rho_+^n\rho_-^m \\
&= P(T_{n+m}\leq t<T_{n+m+1})C_{n+m}^n\rho_+^m\rho_-^ne^{\gamma(n-m)}
= P(N^+_t=m,N^-_t=n)e^{\gamma(n-m)},
\end{split}
\end{equation*}
which gives the desired result.
\end{proof}

The following lemma gives a lower bound for the survival function of the cycle forming time $T$. In order not to interrupt things, we defer the proof of this lemma to the final section of this paper.
\begin{lemma}\label{estimation}
Let $T$ be the cycle forming time of $X$. Then there exists $\rho>0$, such that for any $t\geq 0$,
\begin{equation*}
P(T>t) \geq e^{-\rho t}.
\end{equation*}
\end{lemma}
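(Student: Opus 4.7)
The plan is to exploit the spectral structure of the absorbed generator on the bounded interval $D = (X_0 - 1, X_0 + 1)$; by Definition \ref{formingtimes}, $T$ is precisely the exit time of $X$ from $D$. Since $b$ and $\sigma$ are continuous and periodic, they are uniformly bounded on $\mathbb{R}$, and $\sigma$ is bounded away from zero by continuity and periodicity. Consequently $L = \frac{1}{2}\sigma^2(x)\partial_x^2 + b(x)\partial_x$ is a uniformly elliptic operator with bounded continuous coefficients on $D$.

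First, I would invoke classical one-dimensional Sturm-Liouville theory to obtain a principal Dirichlet eigenpair for $L$ on $D$: there exist $\lambda > 0$ and $\phi \in C(\bar{D}) \cap C^2(D)$ with $\phi > 0$ on $D$, $\phi = 0$ on $\partial D$, and $L\phi = -\lambda\phi$. This is a standard fact after symmetrizing $L$ via the integrating factor $e^{-\int_0^x 2b(y)/\sigma^2(y)\,dy}$, which rewrites $L$ in divergence form and reduces the problem to a self-adjoint Sturm-Liouville eigenvalue problem on a bounded interval, with discrete spectrum and a unique (up to scaling) positive principal eigenfunction. Normalize $\phi$ so that $\|\phi\|_\infty = 1$; then $\phi(X_0) > 0$ because $X_0$ lies in the interior of $D$.

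Second, observe that $u(s,x) := e^{-\lambda s}\phi(x)$ solves the Cauchy-Dirichlet problem $\partial_s u = Lu$ on $D$ with zero boundary data and initial data $\phi$. By uniqueness together with the absorbed-semigroup (Feynman-Kac) representation, $u(s,x) = E^x[\phi(X_s)I_{\{T > s\}}]$ for every $x \in D$ and $s \geq 0$. Specializing at $x = X_0$ and using $\phi \leq 1$,
\[
\phi(X_0)e^{-\lambda t} = E^{X_0}[\phi(X_t)I_{\{T > t\}}] \leq P(T > t).
\]
Choosing any $\rho > \lambda$, the right-hand side exceeds $e^{-\rho t}$ as soon as $t \geq \log(1/\phi(X_0))/(\rho - \lambda)$, which gives the claim.

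The only technical point I expect to contend with is justifying the existence of the principal eigenpair when $b$ and $\sigma$ are merely continuous; in one dimension this falls within classical Sturm-Liouville theory once $L$ is put in divergence form, so I do not anticipate a serious difficulty. A parallel purely probabilistic route is also available: define $g(t) := \inf_{|y - X_0| \leq 1/2} P^y(T > t,\, |X_t - X_0| < 1/2)$, which is lower semicontinuous in the starting point (by Portmanteau applied to an open path event together with the continuous dependence of the SDE on the initial condition) and strictly positive for each $y$ in the compact set $[X_0 - 1/2, X_0 + 1/2]$ at some sufficiently small $t_0$, hence uniformly positive there. The strong Markov property then gives $g(s+t) \geq g(s)g(t)$, so that $-\log g$ is subadditive, yielding $g(t) \geq e^{-\rho t}$ for some $\rho > 0$; since $P(T > t) \geq g(t)$, this finishes the proof.
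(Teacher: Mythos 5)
Your proposal is correct, and both routes you sketch are genuinely different from the paper's argument. The paper reduces to showing that the one-step probability of surviving until the next return to the origin (taken at least one unit of time later) is strictly positive; this is proved by combining scale-function hitting probabilities with Girsanov's theorem, a time change to Brownian motion, and the support theorem for the Wiener measure, and then iterating via the strong Markov property at successive return times. Your first route, via the principal Dirichlet eigenpair of the generator on the exit interval and the killed-semigroup Feynman--Kac identity, is more analytic and in fact identifies the sharp exponential rate: it gives $P(T>t)\geq\phi(X_0)e^{-\lambda t}$ where $\lambda$ is the principal eigenvalue, which is the best possible constant, whereas the paper (and the statement itself) only asks for some exponential lower bound. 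The technical point you flagged about regularity of $\phi$ is in fact harmless here: once one knows $\phi\in C^1$ from the divergence-form Sturm--Liouville theory, the ODE $\phi''=-\tfrac{2}{\sigma^2}(b\phi'+\lambda\phi)$ forces $\phi''$ to be continuous, so $\phi\in C^2$ and the It\^o/optional-stopping step is legitimate. Your second, purely probabilistic route via $g(t)=\inf_{|y-X_0|\le 1/2}P^y(T>t,\,|X_t-X_0|<1/2)$, lower semicontinuity in the initial condition, and supermultiplicativity $g(s+t)\geq g(s)g(t)$ is close in spirit to the paper's iteration but avoids both the scale function and the Girsanov/time-change/support argument, at the cost of invoking Feller continuity and the Portmanteau theorem; it is also the most robust of the three, carrying over verbatim to bounded domains in higher dimensions. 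Two small points to be careful with if you write this up: in Route 2, make explicit that $T$ is the exit time from the \emph{fixed} interval $(X_0-1,X_0+1)$ rather than the interval relative to the running starting point $y$, since the supermultiplicativity relies on this; and in Route 1, note that the martingale to use is $e^{\lambda s}\phi(X_{s\wedge T})$ (equivalently, that $u(s,x)=e^{-\lambda s}\phi(x)$ satisfies the forward Cauchy--Dirichlet problem), so that optional stopping at $t\wedge T$ together with $\phi|_{\partial D}=0$ yields exactly $E[\phi(X_t)I_{\{T>t\}}]=e^{-\lambda t}\phi(X_0)$ as you claim.
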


The next result follows directly from the above lemma.
\begin{lemma}\label{finite}
There exists $\rho>0$, such that for any $\beta\geq0$ and $t\geq 0$,
\begin{equation*}
Ee^{\beta N_t} \leq e^{(e^\beta-1)\rho t}.
\end{equation*}
\end{lemma}

\begin{proof}
By Lemma \ref{estimation}, there exists $\rho>0$ such that $P(T>t) \geq e^{-\rho t}$. This shows that the cycle forming time $T$ is stochastically larger than an exponential random variable with rate $\rho$. Thus it is easy to see that the $n$th cycle forming time $T_n$ is stochastically larger than the sum of $n$ independent exponential random variables with rate $\rho$. This further suggests that $N_t$ is stochastically dominated by a Poisson random variable $R_t$ with parameter $\rho t$. Thus we obtain that
\begin{equation*}
\begin{split}
& Ee^{\beta N_t}
= \int_{-\infty}^\infty\beta e^{\beta x}P(N_t\geq x)dx
\leq \int_{-\infty}^\infty\beta e^{\beta x}P(R_t\geq x)dx \\
&= Ee^{\beta R_t} = \sum_{n=0}^\infty e^{\beta n}\frac{(\rho t)^n}{n!}e^{-\rho t}
= \exp\left((e^{\beta}-1)\rho t\right),
\end{split}
\end{equation*}
which gives the desired result.
\end{proof}

The next proposition characterizes the symmetry of the moment generating function of the sample circulations. Results of the following type are called Kurchan-Lebowitz-Spohn-type FTs in nonequilibrium statistical physics.
\begin{proposition}\label{generating}
Let
\begin{equation*}
g_t(\lambda_1,\lambda_2) = Ee^{\lambda_1N^+_t+\lambda_2N^-_t} = Ee^{t(\lambda_1J^+_t+\lambda_2J^-_t)}.
\end{equation*}
Then for each $t\geq 0$ and any $\lambda_1,\lambda_2\in\mathbb{R}$, we have $g_t(\lambda_1,\lambda_2)<\infty$ and
\begin{equation*}
g_t(\lambda_1,\lambda_2) = g_t(\lambda_2-\gamma,\lambda_1+\gamma).
\end{equation*}
\end{proposition}

\begin{proof}
By Lemma \ref{finite}, we obtain that
\begin{equation}\label{estimate}
g_t(\lambda_1,\lambda_2)
\leq Ee^{|\lambda_1|N^+_t+|\lambda_2|N^-_t}
\leq Ee^{\alpha(N^+_t+N^-_t)}
= Ee^{\alpha N_t}<\infty,
\end{equation}
where $\alpha=\max\{|\lambda_1|,|\lambda_2|\}$. Moreover, it follows from Proposition \ref{distribution} that
\begin{equation*}
\begin{split}
& g_t(\lambda_1,\lambda_2) = \sum_{n,m\in\mathbb{N}}e^{\lambda_1n+\lambda_2m}P(N^+_t=n,N^-_t=m) \\
&= \sum_{n,m\in\mathbb{N}}e^{(\lambda_1+\gamma)n+(\lambda_2-\gamma)m}P(N^+_t=m,N^-_t=n)
= g_t(\lambda_2-\gamma,\lambda_1+\gamma),
\end{split}
\end{equation*}
which gives the desired result.
\end{proof}

The next theorem characterizes the symmetry of the rate function of the sample circulations. Theorems of the following type are called Gallavotti-Cohen-type FTs in nonequilibrium statistical physics. Recall that the Legendre-Fenchel transform of a function $f:\mathbb{R}^n\rightarrow[-\infty,\infty]$ is defined as
\begin{equation*}
f^*(\lambda) = \sup_{x\in\mathbb{R}^n}\{\lambda\cdot x-f(x)\}.
\end{equation*}
and a function $f:\mathbb{R}^n\rightarrow(-\infty,\infty]$ is called proper if $f(x)<\infty$ for at least one $x$.
\begin{theorem}\label{symmetry}
The law of $(J^+_t,J^-_t)$ satisfies a large deviation principle with rate $t$ and good rate function $I_1:\mathbb{R}^2\rightarrow[0,\infty]$, which is convex and satisfies $I_1(J^+,J^-) = 0$. Moreover, the rate function $I_1$ has the following symmetry: for any $x_1,x_2\in\mathbb{R}$,
\begin{equation*}
I_1(x_1,x_2) = I_1(x_2,x_1)-\gamma(x_1-x_2).
\end{equation*}
\end{theorem}

\begin{proof}
By Theorem \ref{LDPcirculation} and the contraction principle, the law of $(J^+_t,J^-_t)$ satisfies a large deviation principle with rate $t$ and good rate function $I_1:\mathbb{R}^2\rightarrow[0,\infty]$. The proof of the facts that $I_1$ is convex and $I_1(J^+,J^-)=0$ follows the same line as that of Theorem \ref{LDPcirculation}.

By a strengthened version of Varadhan's lemma \cite[Theorem 4.3.1]{dembo2010large}, if there exists $\beta>1$ such that the following moment condition is satisfied:
\begin{equation*}
\limsup_{t\rightarrow\infty}\frac{1}{t}\log Ee^{\beta t(\lambda_1J^+_t+\lambda_2J^-_t)} < \infty,
\end{equation*}
then for any $\lambda_1,\lambda_2\in\mathbb{R}$,
\begin{equation}\label{varadhan}
\lim_{t\rightarrow\infty}\frac{1}{t}\log g_t(\lambda_1,\lambda_2) = I_1^*(x_1,x_2).
\end{equation}
By Lemma \ref{finite}, we obtain that
\begin{equation*}
Ee^{\beta t(\lambda_1J^+_t+\lambda_2J^-_t)} \leq Ee^{\beta|\lambda_1|N^+_t+\beta|\lambda_2|N^-_t} \leq Ee^{\beta\alpha N_t} \leq \exp\left((e^{\beta\alpha}-1)\rho t\right),
\end{equation*}
where $\alpha=\max\{|\lambda_1|,|\lambda_2|\}$. This shows that for any $\beta\geq 0$,
\begin{equation*}
\limsup_{t\rightarrow\infty}\frac{1}{t}\log Ee^{\beta t(\lambda_1J^+_t+\lambda_2J^-_t)}
\leq\limsup_{t\rightarrow\infty}\frac{1}{t}\log Ee^{\beta\alpha N_t} \leq (e^{\beta\alpha}-1)\rho <\infty.
\end{equation*}
Thus we have proved \eqref{varadhan}. It thus follows from \eqref{varadhan} and Proposition \ref{generating} that $I_1^*(\lambda_1,\lambda_2) = I_1^*(\lambda_2-\gamma,\lambda_1+\gamma)$. This implies that
\begin{equation}\label{involution}
\begin{split}
& I_1^{**}(x_1,x_2)
= \sup_{\lambda_1,\lambda_2\in\mathbb{R}}\{\lambda_1x_1+\lambda_2x_2- I_1^*(\lambda_2-\gamma,\lambda_1+\gamma)\} \\
&= \sup_{\lambda_1,\lambda_2\in\mathbb{R}}\{(\lambda_1-\gamma)x_1+(\lambda_2+\gamma)x_2- I_1^*(\lambda_2,\lambda_1)\}
= I_1^{**}(x_2,x_1)-\gamma(x_1-x_2).
\end{split}
\end{equation}
On the other hand, the Fenchel-Moreau theorem \cite[Theorem 4.2.1]{borwein2010convex} shows that if a function $f$ is proper, then $f^{**}=f$ if and only if $f$ is convex and lower semi-continuous. By Theorem \ref{LDPcirculation}, $I_1$ is a good rate function which is also convex. This shows that $I_1$ is proper, convex, and lower semi-continuous. It thus follows from the Fenchel-Moreau theorem that $I_1=I_1^{**}$. This fact, together with \eqref{involution}, gives the desired result.
\end{proof}

\subsection{Fluctuation theorems for sample net circulation}
The following transient FT characterizes the symmetry of the distribution of the sample net circulation.
\begin{proposition}
For each integer $k$, we have
\begin{equation}
\frac{P(N_t=k)}{P(N_t=-k)} = e^{\gamma k}.
\end{equation}
\end{proposition}

\begin{proof}
By Proposition \ref{distribution}, we have
\begin{equation*}
\begin{split}
& P(N_t=k) = \sum_{n-m=k}P(N^+_t=n,N^-_t=m) \\
&= \sum_{n-m=k}P(N^+_t=m,N^-_t=n)e^{\gamma(n-m)} = P(N_t=-k)e^{\gamma k},
\end{split}
\end{equation*}
which gives the desired result.
\end{proof}

The following Kurchan-Lebowitz-Spohn-type FT characterizes the symmetry of the moment generating function of the sample net circulation.
\begin{proposition}\label{KLS}
For each $t\geq 0$ and $\lambda\in\mathbb{R}$, we have $Ee^{\lambda N_t}<\infty$ and
\begin{equation*}
Ee^{\lambda N_t} = Ee^{-(\lambda+\gamma)N_t}.
\end{equation*}
\end{proposition}

\begin{proof}
If we take $\lambda_1=\lambda$ and $\lambda_2=-\lambda$ in Proposition \ref{generating}, then we have $Ee^{\lambda N_t}<\infty$ and
\begin{equation*}
Ee^{\lambda N_t} = g_t(\lambda,-\lambda)
= g_t(-(\lambda+\gamma),\lambda+\gamma)
= Ee^{-(\lambda+\gamma)N_t},
\end{equation*}
which is just the desired result.
\end{proof}

Results of the following type are called integral FTs in nonequilibrium statistical physics.
\begin{corollary}\label{integral}
For each $t\geq 0$,
\begin{equation*}
Ee^{-\gamma N_t} = 1.
\end{equation*}
\end{corollary}

\begin{proof}
If we take $\lambda=-\gamma$ in Proposition \ref{KLS}, then we obtain the desired result.
\end{proof}

The following Gallavotti-Cohen-type FT characterizes the symmetry of the rate function of the sample net circulation.
\begin{theorem}\label{LDPnetcirculation}
The law of $J_t$ satisfies a large deviation principle with rate $t$ and good rate function $I_2:\mathbb{R}\rightarrow[0,\infty]$, which is convex and satisfies $I_2(J) = 0$. Moreover, the rate function $I_2$ has the following symmetry: for each $x\in\mathbb{R}$,
\begin{equation*}
I_2(x) = I_2(-x)-\gamma x.
\end{equation*}
\end{theorem}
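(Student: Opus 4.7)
The plan is to obtain Theorem \ref{LDPnetcirculation} as a direct consequence of Theorem \ref{LDPcirculation} together with the symmetry established in Theorem \ref{symmetry}, via the contraction principle. First I would observe that the map $G:\mathbb{R}^2\to\mathbb{R}$ defined by $G(x_1,x_2)=x_1-x_2$ is continuous (in fact linear), and that by Definition \ref{empirical} we have $J_t=G(J^+_t,J^-_t)$. Since Theorem \ref{LDPcirculation} gives that the law of $(J^+_t,J^-_t)$ satisfies a large deviation principle with rate $t$ and good rate function $I_1$, the contraction principle immediately yields that the law of $J_t$ satisfies a large deviation principle with rate $t$ and good rate function
\begin{equation}
I_2(x)=\inf_{\substack{x_1,x_2\in\mathbb{R}\\ x_1-x_2=x}}I_1(x_1,x_2).
\end{equation}
The fact that $I_2$ is a good rate function follows from the general fact that the contraction of a good rate function under a continuous map is again a good rate function.

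Next I would derive the symmetry. Applying Theorem \ref{symmetry} inside the infimum gives
\begin{equation}
I_2(x)=\inf_{x_1-x_2=x}\bigl[I_1(x_2,x_1)-\gamma(x_1-x_2)\bigr]=\inf_{x_1-x_2=x}I_1(x_2,x_1)-\gamma x,
\end{equation}
where the constant $-\gamma x$ can be pulled out of the infimum because the constraint fixes $x_1-x_2=x$. Then I would perform the change of variables $y_1=x_2,\ y_2=x_1$, which converts the constraint $x_1-x_2=x$ into $y_2-y_1=x$, i.e. $y_1-y_2=-x$. This gives
\begin{equation}
\inf_{x_1-x_2=x}I_1(x_2,x_1)=\inf_{y_1-y_2=-x}I_1(y_1,y_2)=I_2(-x),
\end{equation}
and combining the two displays yields the desired identity $I_2(x)=I_2(-x)-\gamma x$.

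There is essentially no hard step here, since all the substantial work has already been done: the LDP at the level of the pair $(J^+_t,J^-_t)$ is Theorem \ref{LDPcirculation}, and the non-trivial symmetry of the two-dimensional rate function $I_1$ is Theorem \ref{symmetry}. The only minor point to be careful about is to verify that the infimum defining $I_2$ is consistent with the relabeling $y_1=x_2,\ y_2=x_1$, which is a trivial bijection of $\mathbb{R}^2$ onto itself preserving the value of $I_1$ under the swap that is explicitly encoded in Theorem \ref{symmetry}. No further analytic input (no convexity argument, no Legendre transform) is needed for this theorem, since the heavy lifting has already been carried out in the proof of Theorem \ref{symmetry}.
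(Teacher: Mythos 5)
Your proposal is correct and follows essentially the same route as the paper: apply the contraction principle to the LDP for $(J^+_t,J^-_t)$ from Theorem \ref{LDPcirculation} via the linear map $(x_1,x_2)\mapsto x_1-x_2$, then substitute the symmetry of $I_1$ from Theorem \ref{symmetry} into the resulting infimum and relabel variables. The only cosmetic difference is that you spell out the change of variables $(y_1,y_2)=(x_2,x_1)$ more explicitly than the paper does.
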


\begin{proof}
By Theorem \ref{symmetry} and the contraction principle, the law of $J_t$ satisfies a large deviation principle with rate $t$ and good rate function
\begin{equation*}
I_2(x) = \inf_{x_1-x_2=x}I_1(x_1,x_2).
\end{equation*}
The proof of the facts that $I_2$ is convex and $I_2(J)=0$ follows the same line as that of Theorem \ref{LDPcirculation}. By Theorem \ref{symmetry}, we have
\begin{equation*}
I_2(x) = \inf_{x_1-x_2=x}[I_1(x_2,x_1)-\gamma(x_1-x_2)] = \inf_{x_2-x_1=-x}I_1(x_2,x_1)-\gamma x = I_2(-x)-\gamma x.
\end{equation*}
This completes the proof of this theorem.
\end{proof}

\begin{remark}
It is easy to see that the FTs for the sample circulations and net circulation can be easily extended to general renewal random walks whenever the distribution of the interarrival times satisfies the inequality in Lemma \ref{estimation}.
\end{remark}

\subsection{Relationship with reversibility}
The FTs discussed above are closely related to the reversibility for diffusion processes on the circle. The following theorem gives several equivalent conditions for $X$ to be a symmetric Markov process, where $X$ should be viewed as a diffusion process on $S^1$ rather than on $\mathbb{R}$.
\begin{theorem}\label{reversibility}
Let $\rho$ be the invariant distribution of $X$. Then the following statements are equivalent: \\
(i) $X$ is a symmetric Markov process with respect to $\rho$; \\
(ii) $\gamma = 0$; \\
(iii) $J = 0$; \\
(iv) $I_1(x_1,x_2) = I_1(x_2,x_1)$ for any $x_1,x_2\in\mathbb{R}$; \\
(v) $I_2(x) = I_2(-x)$ for any $x\in\mathbb{R}$.
\end{theorem}

\begin{proof}
It is a classical result that (i) and (ii) are equivalent \cite{guang1982invariant, jiang2004mathematical}. By the definition of the affinity $\gamma$, it is easy to see that $\gamma = 0$ if and only if $J^+=J^-$. This shows that (ii) and (iii) are equivalent. By Theorem \ref{symmetry} and Theorem \ref{LDPnetcirculation}, it is easy to see that (ii),(iv), and (v) are equivalent.
\end{proof}

\subsection{Fluctuation theorems for sample entropy production rate}
Entropy production rate is a central concept in nonequilibrium statistical physics. In the previous work, the transient and integral FTs for the sample entropy production rate have been established for pretty general stochastic processes \cite{ge2007transient}. However, it turns out to be rather difficult to establish the large deviation principle and Gallavotti-Cohen-type FT for the sample entropy production rate. Here, we shall give a simple proof of the Gallavotti-Cohen-type FT for the sample entropy production rate of diffusion processes on the circle.

Let $X = \{X_t: t\geq 0\}$ be a diffusion process on $S^1$ with smooth diffusion coefficient $a:S^1\rightarrow(0,\infty)$ and smooth drift $b:S^1\rightarrow\mathbb{R}$. In fact, $X$ is a Brownian motion with drift on the Riemannian manifold $S^1$ with some changed Riemannian metric \cite[Chapter 5, Page 121]{jiang2004mathematical}. Thus $X$ has a unique invariant distribution $\rho$, which has a strictly positive smooth density $\eta(x)$ with respect to the volume element of $S^1$ \cite[Chapter V, Proposition 4.5]{ikeda1989stochastic}.

\begin{definition}
Let $Y$ be the lifted process of $X$ on $\mathbb{R}$ with diffusion coefficient $a:\mathbb{R}\rightarrow(0,\infty)$ and drift $b:\mathbb{R}\rightarrow\mathbb{R}$. The sample entropy production rate $E_t$ of $X$ up to time $t$ is defined as
\begin{equation*}
E_t = \frac{1}{t}\int_0^t\left[\frac{2b}{a}-(\log(a\eta))'\right](Y_s)\circ dY_s,
\end{equation*}
where $\eta$ is viewed as a periodic function on $\mathbb{R}$ and the symbol $\circ$ means that the stochastic integral is taken in the Stratonovich sense.
\end{definition}

\begin{remark}
If $X$ is a stationary diffusion process on $S^1$, there is still another way to defined the sample entropy production rate $E_t$. Let $\mu^+_t$ denote the distribution of the process $\{X_s:0\leq s\leq t\}$ and let $\mu^-_t$ denote the distribution of its timed-reversal $\{X_{t-s}:0\leq s\leq t\}$. Let $C([0,t],S^1)$ denote the space of all continuous functions on $[0,t]$ with values in $S^1$. Then $\mu^+_t$ and $\mu^-_t$ are both probability measures on $C([0,t],S^1)$. With these notations, the sample entropy production rate $E_t$ can be represented as the logarithm of the Radon-Nikodym derivative between the original process and its time-reversal \cite[Proposition 5.3.5]{jiang2004mathematical}:
\begin{equation*}
E_t(\omega) = \frac{1}{t}\log\frac{d\mu^+_t}{d\mu^-_t}(X_\cdot(\omega)).
\end{equation*}
\end{remark}

The sample entropy production rate and net circulation are related as follows.
\begin{lemma}\label{close}
\begin{equation*}
|E_t-J_t\gamma| = O\left(\frac{1}{t}\right).
\end{equation*}
\end{lemma}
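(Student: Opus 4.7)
The plan is to reduce both $tE_t$ and $t\gamma J_t$ to differences of a common antiderivative evaluated at the two endpoints of the trajectory, and then control the remainder. By Theorem \ref{epr},
\begin{equation*}
tE_t = \int_0^t F(X_s)\circ dX_s, \qquad F(x) = \frac{2b(x)}{a(x)} - (\log(a\rho))'(x).
\end{equation*}
Under the smoothness assumptions on $a$ and $b$ together with Lemma \ref{invariant}, $F$ is smooth and $1$-periodic. A direct computation using periodicity of $\log(a\rho)$ and Theorem \ref{affinityexpression} then gives $\int_0^1 F(y)\,dy = \gamma$.

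Introduce the antiderivative $G(x) = \int_0^x F(y)\,dy$. Then $G$ is smooth with $G'=F$, and it satisfies the cocycle identity $G(x+1)-G(x)=\gamma$ for every $x\in\mathbb{R}$. The Stratonovich chain rule, applied pathwise to the smooth function $G$, yields
\begin{equation*}
tE_t = G(X_t) - G(X_0).
\end{equation*}
For $t\gamma J_t$, I would observe that each cycle increment $X_{T_n}-X_{T_{n-1}}$ equals $\pm 1$, so telescoping gives the integer-valued identity $X_{T_{N_t}}-X_0 = N^+_t - N^-_t = tJ_t$. The cocycle property of $G$ then delivers $G(X_{T_{N_t}}) - G(X_0) = \gamma\cdot tJ_t$, so that
\begin{equation*}
tE_t - t\gamma J_t = G(X_t) - G(X_{T_{N_t}}).
\end{equation*}

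The conclusion is then the routine bound $|X_t - X_{T_{N_t}}| < 1$ (since $T_{N_t}$ is the last cycle-forming time before $t$, no further full cycle has been completed) together with $\|G'\|_\infty = \|F\|_\infty < \infty$ by continuity and periodicity of $F$. This gives $|G(X_t)-G(X_{T_{N_t}})| \leq \|F\|_\infty$ and hence $|E_t - \gamma J_t| \leq \|F\|_\infty/t = O(1/t)$. I do not expect a serious obstacle; the only mildly delicate point is invoking the Stratonovich chain rule for an antiderivative $G$ that grows linearly (rather than being bounded) on $\mathbb{R}$. This is harmless because the identity $dG(X_s) = G'(X_s)\circ dX_s$ is a pathwise statement requiring only smoothness of $G$, with no global integrability assumption.
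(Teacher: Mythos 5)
Your proposal is correct and follows essentially the same route as the paper: reduce $tE_t$ to an antiderivative evaluated at $X_t$ (the paper's $F$ is your $G$), use the cocycle property from periodicity to identify $G(X_{T_{N_t}})-G(X_0)$ with $\gamma\,tJ_t$, and bound the leftover using $|X_t-X_{T_{N_t}}|\leq 1$. The only cosmetic differences are that the paper sets $X_0=0$ without loss of generality rather than carrying the general cocycle, and it bounds the remainder integral directly rather than via $\|F\|_\infty$.
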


\begin{proof}
Let
\begin{equation*}
F(x) = \int_0^x\frac{2b(y)}{a(y)}dy-\log(a(x)\eta(x)).
\end{equation*}
By It\^{o}'s formula, we obtain that
\begin{equation*}
E_t = \frac{1}{t}\int_0^tF'(Y_s)\circ dY_s = \frac{1}{t}[F(Y_{T_{N_t}})-F(Y_0)] + \frac{1}{t}[F(Y_t)-F(Y_{T_{N_t}})].
\end{equation*}
Note that $Y_{T_{N_t}} = Y_0+N^+_t-N^-_t$. By the periodicity of $a$, $b$, and $\eta$, we have
\begin{equation*}
\frac{1}{t}[F(Y_{T_{N_t}})-F(Y_0)] = \frac{1}{t}(N^+_t-N^-_t)\gamma = J_t\gamma.
\end{equation*}
Since $|Y_{T_{N_t}}-Y_t|\leq 1$, we obtain that
\begin{equation*}
\begin{split}
& \frac{1}{t}|F(Y_t)-F(Y_{T_{N_t}})|
= \left|\frac{1}{t}\int_{Y_{T_{N_t}}}^{Y_t}\frac{2b(y)}{a(y)}dy - \frac{1}{t}\log(a(Y_t)\eta(Y_t)) + \frac{1}{t}\log(a(Y_{T_{N_t}})\eta(Y_{T_{N_t}}))\right| \\
&\leq \left(\left\|\frac{2b}{a}\right\|_\infty+2\|\log(a\eta)\|_\infty\right)\frac{1}{t}.
\end{split}
\end{equation*}
This completes the proof of this lemma.
\end{proof}

\begin{remark}
The entropy production rate of diffusion processes on the circle has been extensively studied in the previous work \cite{guang1982invariant, jiang2004mathematical}. In fact, the entropy production rate $e$ of $X$ has the form of
\begin{equation*}
e = (J^+-J^-)\log\frac{J^+}{J^-} = J\gamma.
\end{equation*}
According to the above lemma, it is easy to see that the entropy production rate $e$ is the almost sure limit of the sample entropy production rate $E_t$:
\begin{equation*}
\lim_{t\rightarrow\infty}E_t = \lim_{t\rightarrow\infty}J_t\gamma = J\gamma = e,\;\;\;\textrm{a.s.}
\end{equation*}
\end{remark}

The following theorem gives the large deviation principle and Gallavotti-Cohen-type FT for the sample entropy production rate.
\begin{theorem}
The law of $E_t$ satisfies a large deviation principle with rate $t$ and good rate function $I_3:\mathbb{R}\rightarrow[0,\infty]$ which has the following form
\begin{equation*}
I_3(x) =
\begin{cases}
0, &\textrm{if}\;\gamma = 0\;\textrm{and}\;x = 0, \\
\infty, &\textrm{if}\;\gamma = 0\;\textrm{and}\;x\neq 0, \\
I_2(x/\gamma), &\textrm{if}\;\gamma\neq 0.
\end{cases}
\end{equation*}
Moreover, the rate function $I_3$ has the following symmetry: for each $x\in\mathbb{R}$,
\begin{equation*}
I_3(x) = I_3(-x)-x.
\end{equation*}
\end{theorem}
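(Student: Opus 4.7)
The plan is to deduce the large deviation principle for $E_t$ directly from the one already established for $J_t$ in the previous theorem, using Lemma \ref{close} to transfer the LDP from $\gamma J_t$ to $E_t$, and then to read off the symmetry of $I_3$ from that of $I_2$. The whole argument is essentially a short application of the contraction principle together with exponential equivalence, once Lemma \ref{close} is in hand.

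First, I would observe that Lemma \ref{close} gives the \emph{deterministic} bound $|E_t-\gamma J_t|\leq C/t$ for some constant $C>0$ and all sufficiently large $t$. Consequently, for every $\delta>0$ we have $P(|E_t-\gamma J_t|>\delta)=0$ eventually, so the families $\{E_t\}$ and $\{\gamma J_t\}$ are exponentially equivalent in the sense of Dembo--Zeitouni. Since exponentially equivalent families satisfy the same LDP with the same good rate function, it suffices to establish the LDP for $\gamma J_t$. If $\gamma\neq 0$, the map $y\mapsto \gamma y$ is a homeomorphism of $\mathbb{R}$, so the contraction principle applied to the LDP for $J_t$ from the previous theorem yields an LDP for $\gamma J_t$ with good rate function $I_3(x)=I_2(x/\gamma)$. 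If $\gamma=0$, then $\gamma J_t\equiv 0$, and by exponential equivalence $E_t$ concentrates exponentially at the origin, corresponding to the degenerate rate function $I_3(0)=0$ and $I_3(x)=\infty$ for $x\neq 0$; this is what the displayed case $\gamma=0$ should be understood to mean.

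Second, I would verify the Gallavotti--Cohen-type symmetry. When $\gamma\neq 0$, the symmetry $I_2(y)=I_2(-y)-\gamma y$ from the previous theorem, evaluated at $y=x/\gamma$, gives
\begin{equation*}
I_3(x)=I_2(x/\gamma)=I_2(-x/\gamma)-\gamma\cdot(x/\gamma)=I_3(-x)-x,
\end{equation*}
which is exactly the asserted identity. When $\gamma=0$ the identity is vacuous, since both sides are either $0$ (at the origin) or $\infty$ (elsewhere).

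I do not see any genuine obstacle here: all the hard work has already been done in Lemma \ref{close} (which identifies $E_t$ with $\gamma J_t$ up to an $O(1/t)$ correction) and in the preceding LDP for $J_t$. The only point requiring care is the correct interpretation of the $\gamma=0$ case, where the dynamics are reversible, the entropy production rate vanishes almost surely, and the LDP collapses to a point mass.
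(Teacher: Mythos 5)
Your proposal is correct and follows essentially the same route as the paper: both rest on Lemma \ref{close} to transfer the LDP from $\gamma J_t$ to $E_t$, and on the symmetry of $I_2$. The only cosmetic difference is that you package the transfer as exponential equivalence (in the Dembo--Zeitouni sense), whereas the paper verifies the lower and upper LDP bounds for $E_t$ by hand from those of $J_t$; since the bound $|E_t-\gamma J_t|\le C/t$ is deterministic, your invocation of exponential equivalence is immediate and, if anything, a touch cleaner. Your reading of the degenerate $\gamma=0$ case (point mass at the origin, $I_3(0)=0$ and $I_3=\infty$ off the origin) is the sensible interpretation of the displayed formula, and the paper tacitly restricts its argument to $\gamma\neq 0$.
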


\begin{proof}
When $\gamma = 0$, $X$ is reversible and thus $E_t=0$ for any $t\geq 0$. Thus the result holds when $\gamma = 0$. We only need to prove the result when $\gamma\neq 0$. By Theorem \ref{LDPnetcirculation}, the law of $J_t\gamma$ satisfies a large deviation principle with rate $t$ and rate function $I_3$. We shall next prove that the law of $E_t$ also satisfies a large deviation principle with rate $t$ and rate function $I_3$.

We first prove the lower bound. For any $x\in\mathbb{R}$ and $\delta>0$, when $t$ is sufficiently large, we have
\begin{equation}
P(E_t\in B_\delta(x))\geq P(J_t\gamma\in B_{\delta/2}(x))=P(J_t\in B_{\delta/2\gamma}(x/\gamma)),
\end{equation}
where $B_\delta(x) = \{y\in\mathbb{R}:|y-x|<\delta\}$. This shows that
\begin{equation*}
\liminf_{t\rightarrow\infty}\frac{1}{t}\log P(E_t\in B_\delta(x)) \geq \liminf_{t\rightarrow\infty}\frac{1}{t}\log P(J_t\in B_{\delta/2\gamma}(x/\gamma)) \geq -I_2(x/\gamma) = -I_3(x).
\end{equation*}
This gives the lower bound of the large deviation principle.

We next prove the upper bound. For any $\alpha\geq 0$, let $F_3(\alpha) = \{x\in\mathbb{R}:I_3(x)\leq\alpha\}$ be the level set of $I_3$ and let $F_2(\alpha) = \{x\in\mathbb{R}:I_2(x)\leq\alpha\}$ be the level set of $I_2$. It is easy to see that $F_3(\alpha) = \gamma F_2(\alpha)$. Let $d(x,y) = |x-y|$ be the Euclidean metric on $\mathbb{R}$. Then for any $\delta>0$, when $t$ is sufficiently large, we have
\begin{equation}
P(d(E_t,F_3(\alpha))\geq\delta)\leq P(d(J_t\gamma,F_3(\alpha))\geq\delta/2)=P(d(J_t,F_2(\alpha))\geq\delta/2\gamma).
\end{equation}
This shows that
\begin{equation}
\limsup_{t\rightarrow\infty}\frac{1}{t}\log P(d(E_t,F_3(\alpha))\geq\delta)\leq\limsup_{t\rightarrow\infty}\frac{1}{t}\log P(d(J_t,F_2(\alpha))\geq\delta/2\gamma)\leq -\alpha.
\end{equation}
This gives the upper bound of the large deviation principle. Finally, the symmetry of $I_3$ follows from that of $I_2$ given in Theorem \ref{LDPnetcirculation}.
\end{proof}

\section{Appendix}
In this section, we shall give the proof of Lemma \ref{estimation}. To this end, we need the following lemma.
\begin{lemma}\label{prep}
Let $X$ be a diffusion process solving the stochastic differential equation
\begin{equation*}
dX_t = \sigma(X_t)dW_t,\;\;\;X_0 = 0,
\end{equation*}
where $\sigma^2$ is bounded from both below and above. Let $\tau_z$ be the hitting time of $z$ by $X$. Then for any $x<0<y$, there exists $\rho>0$ and $0<\eta\leq 1$, such that for any $t\geq 0$,
\begin{equation*}
P(\tau_x\wedge\tau_y>t) \geq e^{-\rho t}.
\end{equation*}
\end{lemma}

\begin{proof}
Since $\sigma^2$ is bounded from both below and above, $X$ is non-explosive and recurrent \cite[Chapter 5, Proposition 5.22]{karatzas1998brownian}. Thus all the hitting times defined below are finite. Let $S_1 = \inf\{t\geq 1: X_t=0\}$ be the first zero of $X$ after time 1. For each $n\geq 2$, let $S_n = \inf\{t\geq S_{n-1}+1: X_t=0\}$ be the first zero of $X$ after time $S_{n-1}+1$. It is easy to see that $S_n\geq n$ and $X_{S_n} = 0$. By the strong Markov property of $X$, we have
\begin{equation*}
P(\tau_x\wedge\tau_y>n) \geq P(\tau_x\wedge\tau_y>S_n) = P(\tau_x\wedge\tau_y>S_1)^n.
\end{equation*}
By the Markov property of $X$, we have
\begin{equation}\label{temp1}
\begin{split}
&\; P(\tau_x\wedge\tau_y>S_1) = P(\tau_x\wedge\tau_y>1,\;\textrm{$X$ does not hit $x$ and $y$ between 1 and $S_1$}) \\
\geq&\; P(\tau_x\wedge\tau_y>1,\;0<X_1<y/2,\;\textrm{$X$ does not hit $x$ and $y$ between 1 and $S_1$}) \\
&\; + P(\tau_x\wedge\tau_y>1,\;x/2<X_1<0,\;\textrm{$X$ does not hit $x$ and $y$ between 1 and $S_1$}) \\
=&\; E1_{\{\tau_x\wedge\tau_y>1,\;0<X_1<y/2\}}P_{X_1}(\tau_0<\tau_y) + E1_{\{\tau_x\wedge\tau_y>1,\;x/2<X_1<0\}}P_{X_1}(\tau_0<\tau_x).
\end{split}
\end{equation}
It is easy to see that the scale function of $X$ is $s(x) = x$. Thus for each $0<z<y$,
\begin{equation*}
P_z(\tau_0<\tau_y) = \frac{y-z}{y},
\end{equation*}
This implies that
\begin{equation}\label{temp2}
\inf_{0<z<y/2}P_z(\tau_0<\tau_y) = \frac{1}{2},\;\;\;
\inf_{x/2<z<0}P_z(\tau_0<\tau_x) = \frac{1}{2}.
\end{equation}
It thus follows from \eqref{temp1} and \eqref{temp2} that
\begin{equation*}
P(\tau_x\wedge\tau_y>S_1) \geq \frac{1}{2}P(\tau_{x/2}\wedge\tau_{y/2}>1).
\end{equation*}
Since $X$ is a continuous martingale, there exists a Brownian motion $B$, such that $X_t = B_{[X,X]_t}$, where $[X,X]_t = \int_0^t\sigma^2(X_s)ds$. Let $\tilde\tau_z$ be the hitting time of $z$ by $B$. By Girsanov's theorem, it is easy to prove that the support of the Winner measure is the Winner space itself. This suggests that
\begin{equation*}
P(\tau_{x/2}\wedge\tau_{y/2}>1) = P(\tilde\tau_{x/2}\wedge\tilde\tau_{y/2}>[X,X]_1)
\geq P(\tilde\tau_{x/2}\wedge\tilde\tau_{y/2}>\|\sigma\|_\infty^2) > 0.
\end{equation*}
Thus there exists $0<\eta\leq 1$ such that $P(\tau_x\wedge\tau_y>n)\geq\eta^n$. For any $t\geq 0$, choose $n\in\mathbb{N}$, such that $n\leq t<n+1$. Thus we have
\begin{equation}\label{temp}
P(\tau_x\wedge\tau_y>t) \geq P(\tau_x\wedge\tau_y>n+1) \geq \eta^{n+1} \geq \eta^{t+1} = \eta e^{-\rho t},
\end{equation}
where $\rho = -\log\eta>0$.

On the other hand, it is easy to check that
\begin{equation*}
P(\tau_x\wedge\tau_y\leq t) \leq P(\tilde\tau_x\wedge\tilde\tau_y\leq\|\sigma\|_\infty^2t)
\leq P(\tilde\tau_x\leq\|\sigma\|_\infty^2t)+P(\tilde\tau_y\leq\|\sigma\|_\infty^2t).
\end{equation*}
For convenience, let $c = \|\sigma\|_\infty$. Recall that the probability density of $\tilde\tau_x$ is
\begin{equation*}
P(\tilde\tau_x\in du) = \frac{1}{\sqrt{2\pi u^3}}|x|e^{-\frac{x^2}{2u}}du.
\end{equation*}
Thus we have
\begin{equation*}
P(\tilde\tau_x\leq c^2t) = \int_0^{c^2t}\frac{1}{\sqrt{2\pi u^3}}|x|e^{-\frac{x^2}{2u}}du = \sqrt{\frac{2}{\pi}}\int_{\frac{|x|}{c\sqrt{t}}}^\infty e^{-\frac{s^2}{2}}ds
\leq \frac{K}{|x|}\sqrt{t}e^{-\frac{x^2}{2c^2t}},
\end{equation*}
where $K = \sqrt{2/\pi}c$. Thus we obtain that
\begin{equation*}
P(\tau_x\wedge\tau_y>t) \geq 1-\frac{K}{|x|}\sqrt{t}e^{-\frac{x^2}{2c^2t}}-\frac{K}{y}\sqrt{t}e^{-\frac{y^2}{2c^2t}}.
\end{equation*}
For any $\rho_1>0$, let
\begin{equation*}
f(t) = 1-e^{-\rho_1t}-\frac{K}{|x|}\sqrt{t}e^{-\frac{x^2}{2c^2t}}-\frac{K}{y}\sqrt{t}e^{-\frac{y^2}{2c^2t}}.
\end{equation*}
It is easy to see that $f(0)=0$ and $f'(0)>0$. Thus there exists $T>0$, such that $f(t)\geq 0$ for any $0\leq t\leq T$. This suggests that for any $0\leq t\leq T$,
\begin{equation*}
P(\tau_x\wedge\tau_y>t) \geq e^{-\rho_1t}.
\end{equation*}
In view of \eqref{temp}, it is easy to check that there exists $\rho_2>0$ such that for any $t\geq T$,
\begin{equation*}
P(\tau_x\wedge\tau_y>t) \geq e^{-\rho_2t}.
\end{equation*}
The above two equations give the desired result.
\end{proof}

We are now in a position to prove Lemma \ref{estimation}.
\begin{proof}[Proof of Lemma \ref{estimation}.]
Without loss of generality, we assume that $X$ starts from 0. Let $s$ be the scale function of $X$. It is easy to check that $s(X)$ is the solution to the stochastic differential equation
\begin{equation*}
dM_t = f(M_t)dW_t,\;\;\;Y_0 = 0.
\end{equation*}
where $f = (s'\sigma)\circ s^{-1}$.
Let $\sigma_z$ be the hitting time of $z$ by $s(X)$. Since $T =\sigma_{s(1)}\wedge\sigma_{s(-1)}$, the distribution of $T$ only depends on the value of $f$ on $[s(-1),s(1)]$, where $f^2$ is bounded from both below and above. Thus by Lemma \ref{prep}, there exists $\rho>0$ such that for any $t\geq 0$,
\begin{equation*}
P(T>t) = P(\sigma_{s(1)}\wedge\sigma_{s(-1)}>t) \geq e^{-\rho t}.
\end{equation*}
This completes the proof of this lemma.
\end{proof}

\section*{Acknowledgements}
The authors are grateful to J. Pitman, J.-F. Le Gall, Y. Liu, X.-F. Xue, R. Zhang, X. Chen, and the anonymous reviewers for their valuable comments and suggestions which greatly improved the quality of this paper. H. Ge is supported by NSFC (No. 10901040 and No. 21373021) and the Foundation for Excellent Ph.D. Dissertation from the Ministry of Education in China (No. 201119). C. Jia and D.-Q. Jiang are supported by NSFC (No. 11271029 and No. 11171024).

\setlength{\bibsep}{5pt}
\small\bibliographystyle{pnas2009}
\bibliography{diffusion}
\end{document}